\newcommand{\smallfrac}[2]{{\textstyle\frac{#1}{#2}}} 
\newcommand{\jump}[1]{[\![#1]\!]}
\newcommand{\ave}[1]{\{\!\!\{#1\}\!\!\}}
\newcommand{\bs}{\boldsymbol}
\newcommand{\nn}{\mathbf n}
\newcommand{\rr}{\mathbf r}
\newcommand{\xx}{\mathbf x}
\newcommand{\bb}{\mathbf b}
\newcommand{\mm}{\mathbf m}
\newcommand{\nnt}{\widetilde{\mathbf n}}
\newcommand{\aaa}{\mathbf a}
\newtheorem{proposition}{Proposition}[section]
\newtheorem{lemma}[proposition]{Lemma}
\numberwithin{equation}{section}
\title{A fully discrete Calder\'on Calculus for the two-dimensional elastic wave equation}
\date{\today}
\author{V{\'\i}ctor Dom{\'\i}nguez\footnote{Departmento de Ingenier\'\i a Matem\'atica en Inform\'atica, Universidad P\'ublica de Navarra, 31500 Tudela, Spain {\tt victor.dominguez@unavarra.es}}, 
Tonatiuh S\'anchez-Vizuet \& Francisco--Javier
Sayas\footnote{Department of Mathematical Sciences, University of Delaware,
Newark, DE 19716, USA. {\tt \{tonatiuh,fjsayas\}@math.udel.edu}. Partially supported by NSF
grant DMS 1216356.}}
\begin{document}

\maketitle

\begin{abstract}
In this paper we present a full discretization of the layer potentials and boundary integral operators for the elastic wave equation on a parametrizable smooth closed curve in the plane. The method can be understood as a non-conforming Petrov-Galerkin discretization, with a very precise choice of testing functions by symmetrically combining elements on two staggered grids, and using a look-around quadrature formula. Unlike in the acoustic counterpart of this work, the kernel of the elastic double layer operator includes a periodic Hilbert transform that requires a particular choice of the mixing parameters. We give mathematical justification of this fact. Finally, we test the method on some frequency domain and time domain problems, and demonstrate its applicability on smooth open arcs.
\\
{\bf AMS Subject Classification.} 65M38, 65R20.\\
{\bf Keywords.} Time Domain Boundary Integral Equations, Elastic Wave Scattering, Calder\'on calculus
\end{abstract}

\section{Introduction}

In this paper we give a simultaneous discretization of all integral operators that appear in the Calder\'on projector for the time-harmonic elastic wave equation on a smooth parametrizable curve in the plane. We give experimental evidence that the methods are of order three for all computed quantities in the boundary, and also for potential postprocessings. The paper can be considered as a continuation of  \cite{DoLuSa:2014b}, where a fully discrete Calder\'on calculus for the acoustic wave equation was developed. While in that case a one-parameter family of discretizations was defined, we will give mathematical evidence here that the family needs to be restricted to a single discretization method for elastodynamics. Because of the way the operators are discretized, we can take advantage of Lubich's Convolution Quadrature techniques \cite{Lubich:1988, Lubich:1994} and obtain discretization methods for transient problems in scattering of elastic waves.

Let us first give some context to this work. Quadrature methods for periodic integral (and pseudodifferential) equations appeared in the work of Jukka Saranen. In particular, in \cite{SaSc:1993}, it was discovered that logarithmic integral equations can be given a very simple treatment providing methods of order two with simple-minded discretization arguments, as long as some parameters were chosen in a particular (and not easy to justify) way. Related references are \cite{SaSl:1992}, \cite{SlBu:1992}, and \cite{CeDoSa:2002}. Exploiting these ideas, an equally simple quadrature method of order three for a system of integral equations, that combined the single layer and double layer operators, was given in \cite{DoRaSa:2008}. A recent article \cite{DoLuSa:TA} opened new ways by offering an extremely simple form of discretizing the hypersingular operator associated to the Helmholtz equation in a smooth closed curve. As a consequence, there was a chance of creating a full discretization of all operators for the Helmholtz equation \cite{DoLuSa:2014a}, using $\mathcal O(N^2)$ evaluations of the kernels and obtaining second order approximations for all unknowns in a wide collection of integral equations that could be discretized simultaneously. It was then in \cite{DoLuSa:2014b} where it was discovered that a symmetrization process led to order three discretizations. Consistency error estimates for the second order methods were given in \cite{DoLuSa:2014a}, taking advantage of already existing results. The consistency analysis of the order three methods is the subject of current research.

There are two reasons why the extension of the techniques of \cite{DoLuSa:2014b} to the realm of elastic waves is not straightforward. The first of them is the fact that the double layer operator and its adjoint contain a strongly singular operator (a perturbation of the periodic Hilbert transform) that makes the operators of the second kind much more difficult to handle. We show (with experiments and with rigorous mathematical justification) that the consistency error of our type of discretizations when applied to the periodic Hilbert transform has order three. The second difficult ingredient is the need for a regularized formula (\'a la N\'ed\'elec-Maue) that is compatible with our way of discretizing the other operators. While the regularized formula for static elasticity is easy to find in the literature \cite{Nedelec:1982}, the elastic case is much more involved. For instance, in \cite{ChKrMo:2000} the authors opt for a subtraction technique, where the regularized static hypersingular operator is  used and then the difference between the time-harmonic and the static operators is prepared for discretization. Here we will use a formula due to Frangi and Novati \cite{FrNo:1998}, which we fully develop so that our results can be easily replicated. For more literature on regularization of hypersingular operators in elastodynamics we refer to \cite{FrNo:1998}.

We want to emphasize that the one of the top features of these discretizations is their easiness. For elastic waves it is the collection of integral operators (in particular the hypersingular operator) that can be seen as threatening. However, once their formulas are available, the discretization follows very simple rules, with the simple mindedness of low order finite difference methods. Once again, the goal is not the discretization of one equation at a time, but of all possible associated integral equations at the same time. The same discretization methods can be applied verbatim to steady-state two dimensional elasticity and it is likely that they work for the Stokes system, given the fact that the Stokes layer potentials are a combination of the Laplace and Lam\'e potentials.

The paper is structured as follows. The remainder of this section serves as an introduction to the main concepts of the Calder\'on calculus for the linear elasticity problem in the frequency domain. In Section \ref{sec:2} we introduce the main discrete elements (sampling of the curve, mixing matrices) and give a general interpretation of the methods to be defined as non-conforming Petrov-Galerkin methods with very precisely chosen quadrature approximation. In Sections \ref{sec:3} and \ref{sec:4} we present respectively the discrete layer potentials and integral operators. In Section \ref{sec:5} (supported by some technical lemmas given in Appendix \ref{app:B}) we give mathematical justification of why the shape of the testing devices (that we named the fork and the ziggurat in \cite{DoLuSa:2014b}) provides order three consistency errors for the periodic Hilbert transform, which is the only integral operator that appears in the set of boundary integral equations for elasticity and not for acoustic waves. In Section \ref{sec:6} we present some experiments in the frequency domain. In Section \ref{sec:7} we show some experiments in the time-domain using Convolution Quadrature. Finally, in Section \ref{sec:8} we give a treatment of smooth open arcs using a double cosine sampling of the arc and repeating the same ideas of the previous sections.

\paragraph{Geometric entities.}
Let us start by fixing the geometric frame of this paper. A closed simple curve $\Gamma \subset \mathbb R^2$ separates a bounded domain $\Omega^-$ from its exterior $\Omega^+$. The curve will be given by a smooth 1-periodic parametrization $\mathbf x=(x_1,x_2):\mathbb R\to \Gamma$ satisfying $|\mathbf x'(t)|> 0$ for all $t$. It is also assumed that the normal vector field $\mathbf n:=(x_2',-x_1')$ (which will never be normalized in the sequel) points outwards, i.e., the parametrization has positive orientation. For a given smooth enough vector field $\boldsymbol U:\mathbb R^2\setminus\Gamma \to \mathbb R^2$, we will define its parametrized traces (in the style of \cite[Section 8.2]{Kress:2014})
\begin{equation}\label{eq:1.1}
\boldsymbol\gamma^\pm \boldsymbol U:=\boldsymbol U^\pm\circ\mathbf x:\mathbb R \to \mathbb R^2.
\end{equation}
If $\nabla\boldsymbol U$ is the Jacobian matrix of $\boldsymbol U$, we define the stress tensor as
\[
\boldsymbol\sigma(\boldsymbol U):=
	\mu (\nabla\boldsymbol U+(\nabla\boldsymbol U)^\top)+
	\lambda (\nabla\cdot\boldsymbol U) \mathbf I_2 \qquad 
	\mathbf I_2:=\left[\begin{array}{cc} 1 & 0 \\ 0 & 1 \end{array}\right].
\]
This tensor is defined in terms of the two positive Lam\'e parameters $\lambda$ and $\mu$, which describe the elastic properties of a homogeneous isotropic material. The parametrized normal tractions on both sides of $\Gamma$ are defined by
\begin{equation}
\boldsymbol t^\pm \boldsymbol U:=
	(\boldsymbol\sigma(\boldsymbol U)\circ\mathbf x)\mathbf n:\mathbb R \to \mathbb R^2.
\end{equation}
Note that the choice for a non-normalized normal vector field simplifies a boundary-parametrized version of Betti's formula
\[
\int_{\Omega^\pm} 
	\Big(\boldsymbol\sigma(\boldsymbol U): \nabla\boldsymbol V
	+(\mathrm{div}\,\boldsymbol\sigma(\boldsymbol U))\cdot \boldsymbol V\Big)=
	\mp \int_0^1 (\boldsymbol t^\pm\boldsymbol U)(\tau) \cdot (\boldsymbol\gamma^\pm \boldsymbol V)(\tau)
				\mathrm d\tau,
\]
where $\mathrm{div}$ is the divergence operator applied to the rows of a matrix-valued function and the colon denotes the matrix inner product related to the Frobenius norm. The two sided traces and normal tractions are combined in jumps and averages as follows:
\begin{eqnarray*}
\jump{\boldsymbol\gamma\boldsymbol U}:=
	\boldsymbol\gamma^-\boldsymbol U-\boldsymbol\gamma^+\boldsymbol U,
&&
\ave{\boldsymbol\gamma\boldsymbol U}:=
	\smallfrac12(\boldsymbol\gamma^-\boldsymbol U+\boldsymbol\gamma^+\boldsymbol U),\\
\jump{\boldsymbol t\boldsymbol U}:=
	\boldsymbol t^-\boldsymbol U-\boldsymbol t^+\boldsymbol U,
&&
\ave{\boldsymbol t\boldsymbol U}:=
	\smallfrac12(\boldsymbol t^-\boldsymbol U+\boldsymbol t^+\boldsymbol U).
\end{eqnarray*}

\paragraph{The elastic wave equation.} In addition to the Lam\'e parameters $\lambda$ and $\mu$, another positive parameter $\rho>0$ (density) will be needed to describe the material properties of the medium. The transient elastic wave equation in absence of forcing terms is given by
\begin{equation}\label{eq:1.3}
\rho\, \partial_t^2 \boldsymbol U=\mathrm{div}\,\boldsymbol\sigma(\boldsymbol U).
\end{equation}
This equation is usually complemented with initial conditions and, when set on unbounded domains, some kind of causality condition at infinity. Its Laplace transform is given by
\begin{equation}\label{eq:1.4}
\rho\,s^2\, \boldsymbol U=\mathrm{div}\,\boldsymbol\sigma(\boldsymbol U).
\end{equation}
(We will use the same symbol for the unknown of the time-domain and in the Laplace-domain, since we are not going to be mixing them until the very end of the paper.) The time-harmonic wave equation is equation \eqref{eq:1.4} with $s=-\imath\omega$, where $\omega$ is the frequency. The time-harmonic wave equation in an exterior domain like $\Omega^+$ requires the Kupradze-Sommerfeld radiation condition at infinity, see for instance \cite[Section 2.4.3]{AmKaLe:2009}.  When $s$ takes values in $\mathbb C_+:=\{ s\in \mathbb C\,:\, \mathrm{Re}\,s>0\}$, the radiation condition is substituted by demanding that $\boldsymbol U$ and $\nabla\boldsymbol U$ are square integrable.

We will find it advantageous to stay with the Laplace transform version \eqref{eq:1.4} even to describe the time-harmonic case. In practice this will eliminate imaginary units from fundamental solutions and integral operators, and it will impose the use of Macdonald functions (modified Bessel functions of the second kind) instead of Hankel functions. In any case, we will give a simple table of modifications needed to write the operators in the frequency domain. 

\paragraph{Potentials and operators of the Calder\'on Calculus.} Let us now consider two vector-valued 1-periodic functions (parametrized densities) $\boldsymbol\psi,\boldsymbol\eta:\mathbb R\to \mathbb C^2$, and let us assume that they are smooth enough for the next arguments to hold. For any $s\in \mathbb C_+$, the transmission problem
\begin{equation}\label{eq:1.6}
\rho \, s^2\, \boldsymbol U=\mathrm{div}\,\boldsymbol\sigma(\boldsymbol U) 
	\quad \mbox{in $\mathbb R^2\setminus\Gamma$},\qquad
\jump{\boldsymbol\gamma\boldsymbol U}=\boldsymbol\psi,\qquad
\jump{\boldsymbol t\boldsymbol U}=\boldsymbol\eta,
\end{equation} 
admits a unique solution in the Sobolev space $H^1(\mathbb R^2\setminus\Gamma)^2$, as follows from a simple variational argument. (The fact that the trace and normal traction are parametrized does not change the classical treatment of this equation.) The solution of this problem is clearly a linear function of the densities and will be written in the form
\begin{equation}\label{eq:1.6b}
\boldsymbol U=\mathbf S(s)\boldsymbol\eta-\mathbf D(s)\boldsymbol\psi.
\end{equation}
Note that \eqref{eq:1.6} has to be complemented with the radiation condition when $s=-\imath\omega$ for positive $\omega$. The operators $\mathbf S(s)$ and $\mathbf D(s)$ are respectively called the single and double layer elastic potentials. By definition,
\begin{equation}\label{eq:1.7}
\jump{\boldsymbol\gamma\mathbf S(s)\boldsymbol\eta}=\bs 0, \qquad 
\jump{\boldsymbol t\mathbf S(s)\boldsymbol\eta}=\bs\eta,\qquad
\jump{\bs\gamma\mathbf D(s)\bs\psi}=-\bs\psi,\qquad
\jump{\bs t\mathbf D(s)\bs\psi}=\bs 0,
\end{equation}
for arbitrary densities $\bs\eta$ and $\bs\psi$. We then define the four associated boundary integral operators
\begin{subequations}\label{eq:1.8}
\begin{alignat}{6}
\mathbf V(s)\bs\eta:=\ave{\bs\gamma\mathbf S(s)\bs\eta}, 
	&\qquad &
\mathbf J(s)\bs\eta:=\ave{\bs t\mathbf S(s)\bs\eta}, \\
\mathbf K(s)\bs\psi:=\ave{\bs\gamma\mathbf D(s)\bs\psi}, 
	&\qquad &
\mathbf W(s)\bs\psi:=-\ave{\bs t\mathbf D(s)\bs\psi}.
\end{alignat}
\end{subequations}
Explicit integral expressions of the potentials and operators \eqref{eq:1.6b} and \eqref{eq:1.8} will be given in Sections 
\ref{sec:3}-\ref{sec:4}. Given \eqref{eq:1.7}-\eqref{eq:1.8} we can easily derive the jump relations:
\begin{subequations}\label{eq:1.9}
\begin{alignat}{6}
\bs\gamma^\pm \mathbf S(s)=\mathbf V(s),
	&\qquad &
\bs t^\pm \mathbf S(s)=\mp\smallfrac12\mathbf I+\mathbf J(s),\\
\bs\gamma^\pm \mathbf D(s)=\pm\smallfrac12\mathbf I+\mathbf K(s),
	&\qquad&
\bs t^\pm \mathbf D(s)=-\mathbf W(s).
\end{alignat}
\end{subequations}
The operators \eqref{eq:1.8} receive the following respective names: single layer, transpose double layer, double layer, and hypersingular operators. It is important to note that the adjoint of $\mathbf K(s)$ is not $\mathbf J(s)$ but $\mathbf J(\overline s)$. We are not going to worry about the construction of well-posed integral equations for different boundary value problems for the elasticity equation \eqref{eq:1.4}. Instead, following \cite{DoLuSa:2014a}-\cite{DoLuSa:2014b}, we will simultaneously discretize the potentials \eqref{eq:1.6b}, the operators \eqref{eq:1.8}, and the two identity operators in \eqref{eq:1.9} in a way that is stable and compatible, so that all these elements can be used to build discrete counterparts of well-posed boundary integral equations for elastic waves. 

\section{Discrete elements and mixing operators}\label{sec:2}

\paragraph{Sources and observation points.}
The curve $\Gamma$ will be sampled three times, once for the location for densities (sources) and twice for the location of collocation points (targets). The sampling is exactly the same as in \cite{DoLuSa:2014b}. We choose a positive integer $N$, define $h:=1/N$ and sample midpoints, breakpoints, and normals on the main grid (sources):
\begin{equation}\label{eq:2.1}
\mathbf m_j:=\mathbf x(j\,h),\qquad
\mathbf b_j:=\mathbf x((j-\smallfrac12)\,h),\qquad
\mathbf n_j:=h\mathbf n(j\,h).
\end{equation}
This is done for $j\in \mathbb Z_N$, that is, the points are indexed modulo $N$. We then repeat the same process by shifting the uniform grid in parametric spaces $\pm h/6$:
\begin{equation}\label{eq:2.2}
\mathbf m_i^\pm:=\mathbf x((i\pm \smallfrac16)\,h),\qquad
\mathbf b_i^\pm:=\mathbf x((i-\smallfrac12\pm\smallfrac16)\,h),\qquad
\mathbf n_i^\pm:=h\mathbf n((i\pm\smallfrac16\,h).
\end{equation}
This is all the information that is needed from the parametric curve $\Gamma$. Once these elements are sampled, we do not need the parametrization of the curve any longer. It is important to emphasize that all the following constructions {\em can be easily extended to multiple scatterers.} The details are given in \cite{DoLuSa:2014b}. As explained in that reference, we only need to create a next-index counter. The choice of the $\pm1/6$ shifting parameter is justified in \cite[Section 4]{DoLuSa:2014b} by an argument that chooses the optimal shifting of a trapezoidal rule applied to periodic logarithmic operators. This idea can be traced back to \cite{SaSc:1993}. In short, if $\log_\#(t):=\log(4\sin^2(\pi t))$ is a periodic logarithmic function, then for a smooth enough 1-periodic function $\phi$
\begin{equation}\label{eq:2.BB}
\int_0^1 \log_\# (t-\tau) \, \phi(\tau)\mathrm d\tau
-h\sum_{j=1}^N \log_\#(t-j\,h) \phi(j\,h)=\mathcal O(h^2) \quad \Longleftrightarrow\quad
	t=(i\pm\smallfrac16)\,h.
\end{equation}
In order to introduce some matrices relevant to the method, we will use the following notation: $\mathrm{TC}(a_1,\ldots,a_N)$ is the Toeplitz Circulant matrix whose first row is the vector $(a_1,\ldots,a_N)$.

\paragraph{Three mixing matrices.} We first introduce a block-diagonal matrix with circulant tridiagonal blocks:
\begin{equation}\label{eq:2.3}
\mathrm Q:=
	\left[\begin{array}{cc} 
		\mathrm Q_{\mathrm{sc}} & 0 \\
		0 & \mathrm Q_{\mathrm{sc}}
	\end{array}\right],
\qquad
\mathrm Q_{\mathrm{sc}}:=\smallfrac1{24}\, \mathrm{TC}\left(22, 1,0,\ldots,0,1\right).
\end{equation}
The coefficients of the `scalar' operator $\mathrm Q_{\mathrm{sc}}$ are related to a look-around quadrature formula
\begin{equation}\label{eq:2.AA}
\int_{-\frac12}^\frac12 \phi(t)\mathrm d t\approx \frac1{24}(\phi(-1)+22\phi(0)+\phi(1)),
\end{equation}
which has degree of precision three.
We also need two matrices that will be used for averaging the information on the two observation grids. Both of them are block diagonal with circulant bidiagonal blocks:
\begin{equation}\label{eq:2.4}
\mathrm P^\pm:=
	\left[\begin{array}{cc} 
		\mathrm P^\pm_{\mathrm{sc}} & 0 \\
		0 & \mathrm P^\pm_{\mathrm{sc}}
	\end{array}\right],
\qquad
\mathrm P^+_{\mathrm{sc}}:=\smallfrac1{12}\,\mathrm{TC}(5,0,\ldots,0,1),
\qquad
\mathrm P^-_{\mathrm{sc}}:=(\mathrm P^+_{\mathrm{sc}})^\top.
\end{equation}
The matrices $\mathrm P^\pm_{\mathrm{sc}}$ are particular cases of a one-parameter dependent construction of numerical schemes in \cite{DoLuSa:2014b}. As we will justify in Section \ref{sec:5}, this is the right choice of parameters to sample correctly some periodic Hilbert transforms that appear in the double layer operator and its transpose. 

\paragraph{Discrete mass and differentiation matrices.} In this context, the identity operator will be approximated by the block-diagonal matrix
\begin{equation}\label{eq:mass}
\mathrm M:=
	\left[\begin{array}{cc} 
		\mathrm M_{\mathrm{sc}} & 0 \\
		0 & \mathrm M_{\mathrm{sc}}
	\end{array}\right],
\qquad
\mathrm M_{\mathrm{sc}}:=\smallfrac1{9}
	\mathrm{TC}(7,1,0,\ldots,0,1).
\end{equation}
The final matrix to be introduced in this section corresponds to differentiation:
\begin{equation}
\mathrm D:=
	\left[\begin{array}{cc} 
		\mathrm D_{\mathrm{sc}} & 0 \\
		0 & \mathrm D_{\mathrm{sc}}
	\end{array}\right],
\qquad
\mathrm D_{\mathrm{sc}}:=\mathrm{TC}(-1,1,0,\ldots,0).
\end{equation}

\paragraph{A formal explanation.} After parametrization (see the formulas in Sections \ref{sec:3} and \ref{sec:4}), potentials and integral operators for two-dimensional linear elasticity can be seen as acting on periodic functions and, in the case of the integral operators, outputting periodic functions. In the variational theory for boundary integral equations, traces take values in the Sobolev space $H^{1/2}(\Gamma)$ and normal stresses in its dual $H^{-1/2}(\Gamma)$. These spaces are used as trial spaces and also as test spaces. The main idea behind the methods in \cite{DoLuSa:2014b} (a paper devoted to acoustic waves) is using a non-conforming discretization for the spaces $H^{\pm1/2}$, which are the 1-periodic Sobolev spaces we obtain after parametrizing $\Gamma$ in $H^{\pm1/2}(\Gamma)$. Let now $\delta_t$ denote the periodic Dirac delta concentrated at the point $t$, or, better said, the Dirac comb concentrated at $t+\mathbb Z$. There are four kinds of elements in the discrete Calder\'on calculus:
\begin{itemize}
\item The unknowns that live in $H^{-1/2}$ are approximated by linear combinations of $\delta_{jh}$. 
\item Whenever $H^{-1/2}$ plays the role of a test space (i.e., the output of the operator is in $H^{1/2}$), we use a  testing device made of symmetric combinations of Dirac deltas in the companion meshes:
\begin{equation}\label{eq:2.8}
\frac{a}2 (\delta_{(i-1/6)h}+\delta_{(i+1/6)h})+\frac{1-a}2(\delta_{(i-5/6)h}+\delta_{(i+5/6)h}).
\end{equation}
As we have already mentioned, the choice of moving the grid $\pm1/6h$ is forced by the need to correctly sample the logarithmic kernel singularities \eqref{eq:2.BB}. By periodicity, the grids displaced $\pm 5/6h$ are the same as those displaced $\mp 1/6h$, and therefore, all Dirac deltas in the testing device \eqref{eq:2.8} satisfy the second order condition \eqref{eq:2.BB}. Third order is actually attained thanks to symmetry in \eqref{eq:2.8}. 
\item For unknowns in $H^{1/2}$, we use piecewise constant functions, spanned by the periodic characteristic functions defined by $\chi_j'=\delta_{(j-1/2)h}-\delta_{(j+1/2)h}$. 
\item When $H^{1/2}$ is used as a test space (i.e., the operator has output in $H^{-1/2}$), we test the equation by the piecewise constant functions $\xi_i$ defined by
\begin{eqnarray*}
\xi_i' &=& \frac{a}2 (\delta_{(i-1/2-1/6)h}+\delta_{(i-1/2+1/6)h})+\frac{1-a}2(\delta_{(i-1/2-5/6)h}+\delta_{(i-1/2+5/6)h})\\
       &&-\frac{a}2 (\delta_{(i+1/2-1/6)h}+\delta_{(i+1/2+1/6)h})-\frac{1-a}2(\delta_{(i+1/2-5/6)h}+\delta_{(i+1/2+5/6)h}).
\end{eqnarray*}
\end{itemize}
Whenever a Dirac delta acts on a kernel (as input or as a test), this one is automatically evaluated and there is no need for further approximation. Nevertheless, if a piecewise constant function is used, there are still integrals to be approximated. These are decomposed as combinations of integrals of the kernels on intervals of length $h$ and then approximated using the look-around formula \eqref{eq:2.AA}. As shown in \cite{DoLuSa:2014b}, all values of $a\in [1/2,1]$ give order three methods valid for the acoustic Calder\'on calculus. The choice $a=1$ gives the simplest test functions. The choice $a=5/6$ gives the linear combinations that produce the mixing matrices \eqref{eq:2.4}. The corresponding testing devices were named the fork and the ziggurat (see Figure \ref{figure:fork}). We will see in Section \ref{sec:5} that this choice is the only one that provides order-three discretizations of the Hilbert transform-style operators that appear in elastodynamics and not in scalar waves.

\begin{figure}[h]
\begin{center}
\includegraphics[width=5cm]{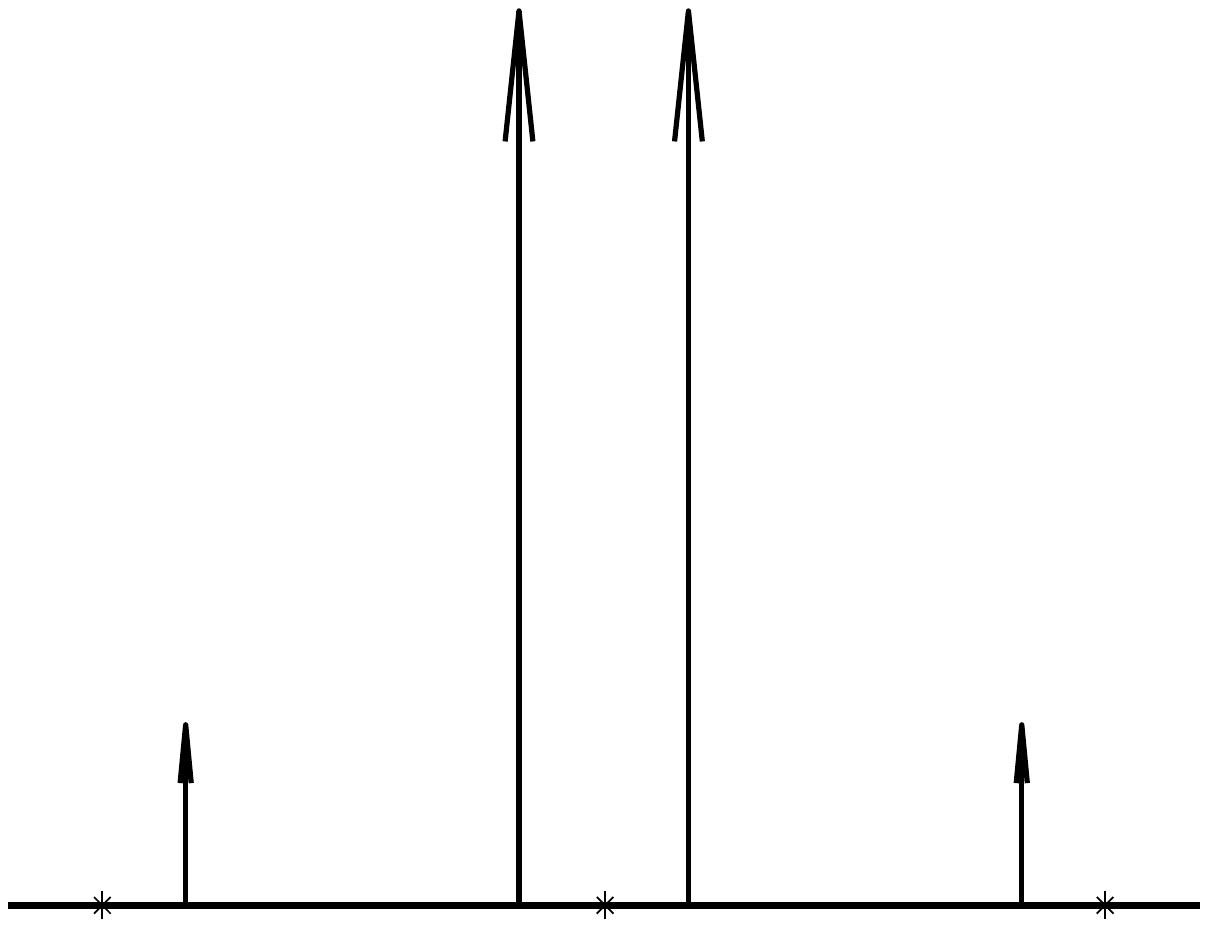}
\qquad
\includegraphics[width=5cm]{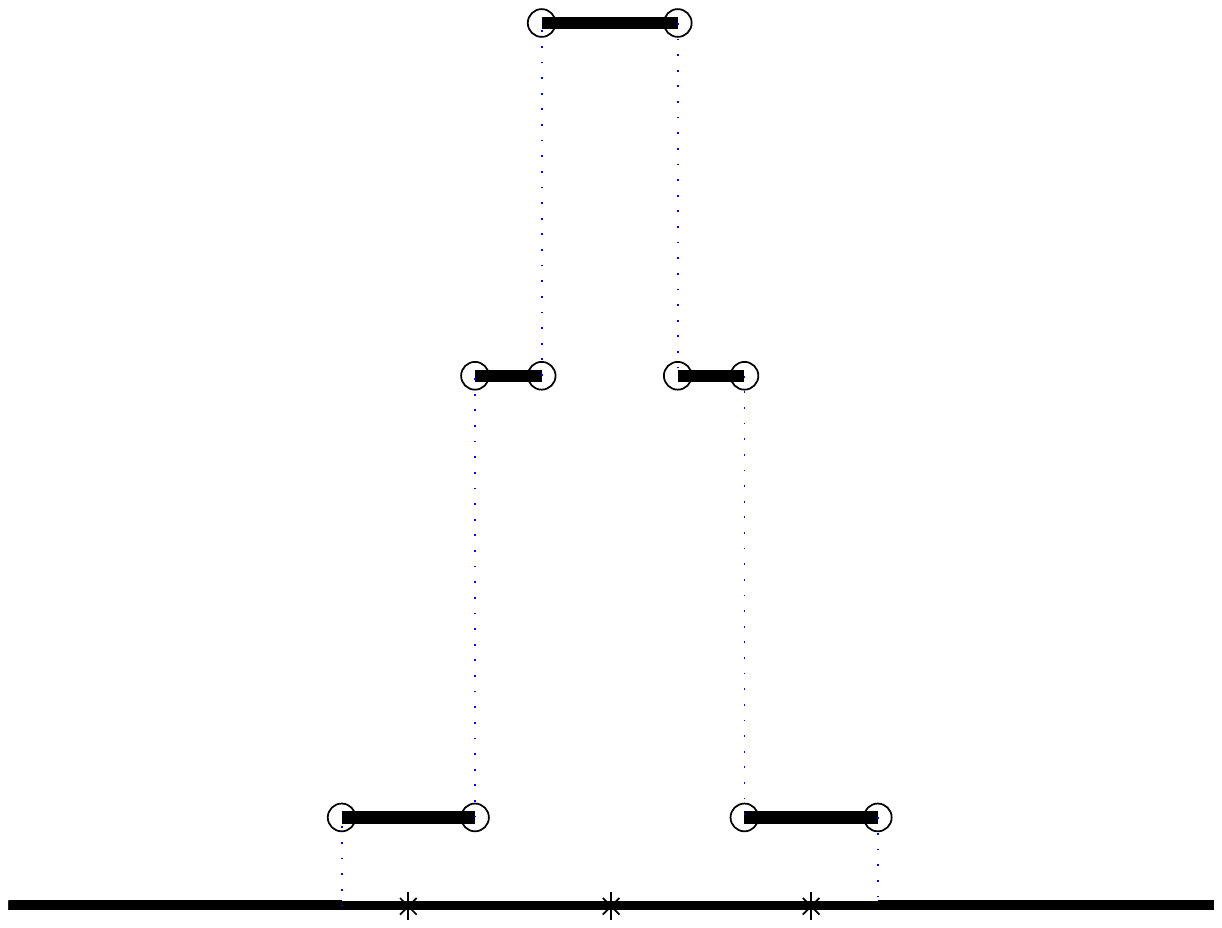}
\end{center}
\caption{A sketch of the shapes of the fork and the ziggurat, the main testing devices of the fully discrete Calder\'on calculus}\label{figure:fork}
\end{figure}

%\paragraph{Mixing operators.}
%To simplify some of the forthcoming expressions, we will use a short-hand operator notation. Let $\bs\phi^\pm\in \mathbb C^N$ be column vectors associated to quantities in the two observation grids. The elements of
%$
%\mathrm P^+\bs\phi^++\mathrm P^-\bs\phi^-
%$
%will be denoted
%\begin{equation}\label{eq:2.3}
%\sum_\pm \phi_i^\pm := \smallfrac1{12}\phi_{i-1}^++\smallfrac5{12}\phi_i^-+
%						\smallfrac5{12}\phi_i^+ + \smallfrac1{12}\phi_{i+1}^-.
%\end{equation}
%The same operator can be carried out (with the same symbol) for the rows of a matrix. The second operator is associated to the multiplication by $\mathrm Q$. We will  write
%\begin{equation}\label{eq:2.4}
%\mathrm{mix}_i a_{ij}=\smallfrac1{24} a_{i-1,j}+\smallfrac1{12}a_{ij}+\smallfrac1{24}a_{i+1,j},
%\qquad
%\mathrm{mix}_j a_{ij}=\smallfrac1{24} a_{i,j-1}+\smallfrac1{12}a_{ij}+\smallfrac1{24} a_{i,j+1}.
%\end{equation}
%When the mixing operator is applied to a vector the index will now be shown explicitly. 

\section{Layer potentials}\label{sec:3}

\paragraph{Notation for blocks.} Densities will be discretizations of 1-periodic vector fields. At the discrete level, these will become vectors in $\mathbb C^{2N}\equiv \mathbb C^N\times \mathbb C^N$. In principle, we will think of discrete densities as a vector of $2N$ entires , where the first $N$ entries discretize the first component of the continuous vector field, and are followed by $N$ entries for the second component. 
Consider then an operator $\mathbb C^{2N}\to \mathbb C^{2M}$, given through left-multiplication by a $(2M)\times (2N)$ matrix, say $\mathrm A$. This matrix can be decomposed in four $M\times N$ blocks. Alternatively, in can be presented by $MN$ blocks of $2\times 2$ size. Thus, we will write
\[
\mathrm A_{ij}:=
	\left[\begin{array}{cc}
		a_{ij} & a_{i,j+N}\\
		a_{i+M,j} & a_{i+M,j+N}
	\end{array}\right]	
\]
to make a simpler transition from the continuous expressions to the discrete ones. 

\paragraph{More physical parameters and auxiliary functions.} In the expression for the fundamental solution of equation \eqref{eq:1.4}, the following parameters will be relevant:
\begin{equation}\label{eq:3.A}
c_L:=\sqrt{\frac{\lambda+2\mu}\rho}, \qquad c_T:=\sqrt\frac{\mu}{\rho},
\qquad \xi:=\frac{c_T}{c_L}=\sqrt{\frac{\mu}{\lambda+2\mu}}.
\end{equation}
The quantity $c_L$ is the speed of pressure (or longitudinal) waves, while $c_T$ is the speed of shear (or transverse) waves. We will make ample use of the modified Bessel function of the second kind and order $n$, $K_n$, for non-negative integer $n$. For readers who want to translate from the Laplace-domain notation to frequency domain notation, recall that $s=-\imath \omega$ and
\[
K_n(s)=\frac{\pi\imath}2 e^{\frac{n\pi\imath}2} H^{(1)}_n (\imath s)
=\frac{\pi\imath}2 e^{\frac{n\pi\imath}2} H^{(1)}_n (\omega).
\]
The following two functions
\begin{subequations}\label{eq:3.0}
\begin{eqnarray}
\psi(r) &:=& K_0(r/c_T)+\frac{c_T}r \left( K_1(r/c_T)-\xi K_1(r/c_L)\right),\\
\chi(r) &:=& K_2(r/c_T)-\xi^2 K_2(r/c_L)
\end{eqnarray}
\end{subequations}
and their first derivatives will also be used.

\paragraph{The single layer potential.} The fundamental solution of the elastic wave equation \eqref{eq:1.4} is given by:
\begin{equation}\label{eq:3.1}
\bs E(\mathbf r;s):=
	\frac1{2\pi\mu}\left( \psi(s\,r) \mathbf I_{2}-\frac{\chi(s\,r)}{r^2} \mathbf r\otimes \mathbf r\right), 
\quad r:=|\mathbf r|, 
\quad\mathbf r\otimes\mathbf r:=\left[\begin{array}{cc} r_1^2 & r_2r_1 \\ r_1r_2 & r_2^2\end{array}\right].
\end{equation}
The parametrized single layer potential is given by
\begin{equation}\label{eq:3.2}
\mathbf S(s;\mathbf z)\bs\eta=\int_0^1 \bs E(\mathbf z-\mathbf x(\tau);s)\boldsymbol\eta(\tau)\mathrm d\tau,\qquad
\mathbf z\in \mathbb R^2\setminus\Gamma,
\end{equation}
for a (1-periodic) density $\bs\eta:\mathbb R\to \mathbb C^2.$ A discrete density $\bs\eta\in \mathbb C^{2N}$ (or, in functional notation, $\bs\eta:\mathbb Z_N \to \mathbb C^2$) can be presented by pairs of entries
\[
\bs\eta_j:=\left[\begin{array}{c} \eta_j \\ \eta_{j+N}\end{array}\right].
\] 
The associated discrete single layer potential is then defined by
\begin{equation}\label{eq:3.3}
\mathrm S_h(s;\mathbf z)\bs\eta:=\sum_{j=1}^N \bs E(\mathbf z-\mathbf m_j;s) \bs\eta_j\qquad 
\mathbf z\in \mathbb R^2\setminus\Gamma.
\end{equation}

\paragraph{The double layer potential.} 
The kernel function for the elastic double layer potential
\begin{equation}
\mathbf D(s;\mathbf z)\bs\psi=\int_0^1 \bs T(\mathbf z-\mathbf x(\tau),\nn(\tau);s)
	\bs\psi(\tau)\mathrm d\tau,\qquad
\mathbf z\in \mathbb R^2\setminus\Gamma,
\end{equation} 
is the function
\begin{eqnarray}
\nonumber
\bs T(\mathbf r,\mathbf n; s)
	&:=& -\frac{s\,\psi'(s\,r)}{2\pi r}
			\left((\mathbf r\cdot\mathbf n)\mathbf I_2+\mathbf n\otimes \mathbf r
				+\frac\lambda\mu \mathbf r\otimes \mathbf n\right)\\
\label{eq:3.4}
	& & +\frac1\pi\chi(s\,r) 
			\left( -\frac{4\mathbf r\cdot\mathbf n}{r^4} \mathbf r\otimes \mathbf r
				+\frac1{r^2} \mathbf n\otimes \mathbf r
				+\frac{\mathbf r\cdot\mathbf n}{r^2} \mathbf I_2 
				+\frac{1}{\xi^2 r^2}\mathbf r\otimes \mathbf n\right)\\
\nonumber
	& & +\frac{s}{2\pi}\chi'(s\,r)
			\left( \frac{2\mathbf r\cdot\mathbf n}{r^3}\,\mathbf r\otimes\mathbf r
				+\frac{\lambda}{\mu\,r}\mathbf r\otimes \mathbf n\right).
\end{eqnarray}
The discrete double layer operator is thus related to the vector fields $\bs T(\mathbf z-\mathbf m_j,\mathbf n_j;s)$
but, instead of this kernel being used as in \eqref{eq:3.3}, it requires to plug the mixing matrix $\mathrm Q$ of \eqref{eq:2.3} between the kernel and the density (recall the arguments given under the epigraph {\em A formal explanation} in Section \ref{sec:2}):
\begin{equation}\label{eq:3.6}
\mathrm D_h(s;\mathbf z)\bs\psi:=
\sum_{j=1}^N \bs T(\mathbf z-\mathbf m_j, \mathbf n_j;s)\bs\psi^{\mathrm{eff}}_j, \qquad 
\bs\psi^{\mathrm{eff}}:=\mathrm Q\bs\psi.
\end{equation}

\paragraph{Sampling of incident waves.} Let $\boldsymbol U^{\mathrm{inc}}$ be an incident wave, be it a plane pressure or shear wave, or a cylindrical wave. We first sample it and its associated normal traction on both companion grids, creating four vectors ($\boldsymbol\beta_0^\pm \in \mathbb C^{2N}$ and $\boldsymbol\beta_1^\pm \in \mathbb C^{2N}$) with blocks
\begin{equation}\label{eq:3.7}
\bs\beta_{0,i}^\pm:=\bs U^{\mathrm{inc}}(\mm_i^\pm),
\qquad
\bs\beta_{1,i}^\pm :=(\bs\sigma(\bs U^{\mathrm{inc}})(\mm_i^\pm) \big)\nn_i^\pm.
\end{equation}
The final version of the sampled incident wave involves the mixing operators the Section \ref{sec:2}:
\begin{equation}\label{eq:3.8}
\bs\beta_0:=\mathrm P^+\bs\beta_0^++\mathrm P^-\bs\beta_0^-,
\qquad
\bs\beta_1:=\mathrm Q(\mathrm P^+\bs\beta_1^++\mathrm P^-\bs\beta_1^-).
\end{equation}

\section{Boundary integral operators}\label{sec:4}

\paragraph{Three integral operators.}
The parametrized integral expressions for the operators $\mathbf V(s)$, $\mathbf K(s)$, and $\mathbf J(s)$ in \eqref{eq:1.8} use the fundamental solution \eqref{eq:3.1} and the kernel \eqref{eq:3.4}:
\begin{subequations}
\begin{alignat}{6}
(\mathbf V(s)\bs\eta)(t) 
	&:=\int_0^1 \bs E(\xx(t)-\xx(\tau);s)\bs\eta(\tau)\mathrm d\tau,\\
(\mathbf K(s)\bs\psi)(t)
	&:=\int_0^1 \bs T(\xx(t)-\xx(\tau),\nn(\tau);s)\bs\psi(\tau)\mathrm d\tau,\\
(\mathbf J(s) \bs\eta)(t)
	&:=\int_0^1 \bs T(\xx(\tau)-\xx(t),\nn(t);s)^\top \bs\eta(\tau) \mathrm d\tau.
\end{alignat}
\end{subequations}
Discretization is carried out in two steps. In a first step, two one-sided operators are produced for each operator:
\begin{subequations}\label{eq:4.2}
\begin{eqnarray}
\mathrm V_{ij}^\pm(s) &:=& \bs E(\mathbf m_i^\pm-\mathbf m_j;s),\\
\mathrm K_{ij}^\pm(s) &:=& \bs T(\mathbf m_i^\pm-\mathbf m_j,\mathbf n_j;s)\\
\mathrm J_{ij}^\pm(s) &:=&\bs T(\mathbf m_j-\mathbf m_i^\pm,\mathbf n_i^\pm;s)^\top.
\end{eqnarray}
\end{subequations}
Then the operators are mixed using the matrices \eqref{eq:2.3} and \eqref{eq:2.4}:
\begin{subequations}
\begin{eqnarray}
\mathrm V_h(s) &:=& \mathrm P^+\mathrm V^+_h(s)+\mathrm P^-\mathrm V^-_h(s),\\
\mathrm K_h(s) &:=& (\mathrm P^+\mathrm K^+_h(s)+\mathrm P^-\mathrm K^-_h(s))\mathrm Q,\\
\mathrm J_h(s) &:=& \mathrm Q(\mathrm P^+\mathrm J^+_h(s)+\mathrm P^-\mathrm J^-_h(s)).
\end{eqnarray}
\end{subequations}
Here $\mathrm V_h^\pm(s)$, $\mathrm K_h^\pm(s)$, and $\mathrm J_h^\pm(s)$ are the matrices defined in \eqref{eq:4.2}.
The logic for the placement of $\mathrm Q$ in the above formulas can be intuited using the non-conforming Petrov-Galerkin interpretation of the method given at the end of Section \ref{sec:2} (see also \cite{DoLuSa:2014b}). 

\paragraph{An integro-differential form for $\mathbf W$.}
The hypersingular operator $\mathbf W(s)$ will be discretized after using a decomposition stemming from a regularization formula in \cite{FrNo:1998}. In order to write it concisely we need to introduce some notation. The function
\begin{equation}
G(r):=\frac1{2\pi\rho}(K_0(r/c_T)-K_0(r/c_L))
\end{equation}
plays a key role in the formula. Four functions are derived from it:
\begin{subequations}\label{eq:4.5}
\begin{eqnarray}
\label{eq:5.1a}
G_1(r) &:=&\left(\frac1r \frac{\mathrm d}{\mathrm dr}+\frac{\mathrm d^2}{\mathrm d r^2}\right) G(r),\\
\label{eq:5.1b}
G_2(r) &:=& \left(\frac1r \frac{\mathrm d}{\mathrm dr}+\frac{\mathrm d^2}{\mathrm d r^2}\right) G_1(r)
		= \left(\frac{\mathrm d^4}{\mathrm dr^4}+\frac2r \frac{\mathrm d^3}{\mathrm dr^3}
			-\frac1{r^2} \frac{\mathrm d}{\mathrm dr^2}+\frac1{r^3} \frac{\mathrm d}{\mathrm dr}\right) G(r),\\
A(r) &:=&\frac1{r^2} G''(r)-\frac1{r^3} G'(r)
		=\frac1{r^2}\left(\frac{\mathrm d^2}{\mathrm dr^2}-\frac1r\frac{\mathrm d}{\mathrm dr}\right)G(r),\\
B(r) &:=& \frac1{r}G'(r)
\end{eqnarray}
\end{subequations}
Note that the differential operators in \eqref{eq:5.1a} and \eqref{eq:5.1b}
are the radial part of the two dimensional Laplacian and bi-Laplacian respectively. We will also use
the matrix-valued functions
\begin{subequations}
\begin{eqnarray}
\boldsymbol H(\mathbf r;s)&:=&s^2 A(s\,r) \mathbf r\otimes \mathbf r+ B(s\,r)\boldsymbol I_2, 
	\qquad r:=|\mathbf r|\\
\boldsymbol M(\rr,\nn,\nnt) &:=& (\nnt\otimes \nn)(\rr\otimes \rr)+(\rr\otimes \rr)(\nnt\otimes \nn),\\
M(\rr,\nn,\nnt) &:=& (\nnt\otimes \nn) : \rr.
\end{eqnarray}
\end{subequations}
Translating carefully the formulas in \cite{FrNo:1998}, we can write the hypersingular operator for the elastic wave equation in integro-differential (regularized) form
\begin{alignat}{6}\label{eq:4.7}
(\boldsymbol W(s)\boldsymbol\psi)(t):=&
	-\frac{\mathrm d}{\mathrm dt} \int_0^1 \boldsymbol W_0(\xx(t)-\xx(\tau);s) \frac{\mathrm d}{\mathrm d\tau}
										\boldsymbol\psi(\tau) \mathrm d\tau \\
\nonumber
	& + \int_0^1 \boldsymbol W_1(\xx(t)-\xx(\tau),\nn(t),\nn(\tau);s) \boldsymbol \psi(\tau)\mathrm d\tau.
\end{alignat}
where
\[
\boldsymbol W_0(\rr;s) :=
	4\mu^2 G_1(s r)-\boldsymbol H(\rr;s)
\]
and
\begin{eqnarray*}
\boldsymbol W_1(\rr,\nn,\nnt;s)	
	&:=&\frac{\lambda+2\mu}{\lambda+\mu} s^2 \Bigg(
				 \mu G_2(sr) \Big( \lambda \widetilde{\mathbf n}\otimes \mathbf n+
									\mu  \mathbf n\otimes \widetilde{\mathbf n}
										+ \mu (\widetilde{\mathbf n}\cdot\mathbf n) \, \boldsymbol I_2 \Big)\\
	& &   -\frac1{c_L^2} \Big(
			\lambda^2 G_1(sr) \widetilde{\mathbf n}\otimes \mathbf n \\
	& &   \phantom{-\frac1{c_L^2} \Big(} 
				+s^2 A(sr)
				\Big( 2\lambda\mu \boldsymbol M(\rr,\nn,\nnt) + \mu^2 M(\rr,\nn,\nnt) \boldsymbol I_2
				+\mu^2 \boldsymbol M(\rr,\nnt,\nn)\Big)\\
     &&  \phantom{-\frac1{c_L^2} \Big(} 
				+ B(sr) \Big( 4\lambda\mu \nnt\otimes \nn + \mu^2 (\nnt\cdot\nn )\boldsymbol I_2
							+\mu^2 s \nn\otimes \nnt\Big)\\
    &&   \phantom{-\frac1{c_L^2} \Big(}
				+ \mu^2 (\nnt\cdot\nn) \boldsymbol H(\rr;s) \Big)\Bigg).
\end{eqnarray*}
%\begin{alignat*}{6}
%\boldsymbol W_1(\rr,\nn,\nnt;s)	
%	&:=&\frac{\lambda+2\mu}{\lambda+\mu} s^2 \Bigg(
%				& \mu G_2(sr) \Big( \lambda \widetilde{\mathbf n}\otimes \mathbf n+
%									\mu  \mathbf n\otimes \widetilde{\mathbf n}
%										+ \mu (\widetilde{\mathbf n}\cdot\mathbf n) \, \boldsymbol I_2) \Big)\\
%	& &  &  -\frac1{c_L^2} \Big(
%			\lambda^2 G_1(sr) \widetilde{\mathbf n}\otimes \mathbf n \\
%	& & &  \phantom{-\frac1{c_L^2} \Big(} 
%				+s^2 A(sr)
%				\Big( 2\lambda\mu \boldsymbol M(\rr,\nn,\nnt) + \mu^2 M(\rr,\nn,\nnt) \boldsymbol I_2
%				+\mu^2 \boldsymbol M(\rr,\nnt,\nn)\Big)\\
%     &&&   \phantom{-\frac1{c_L^2} \Big(} 
%				+ B(sr) \Big( 4\lambda\mu \nnt\otimes \nn + \mu^2 (\nnt\cdot\nn )\boldsymbol I_2
%							+\mu^2 s \nn\otimes \nnt\Big)\\
%    &&&   \phantom{-\frac1{c_L^2} \Big(}
%				+ \mu^2 (\nnt\cdot\nn) \boldsymbol H(\rr;s) \Big)\Bigg).
%\end{alignat*}
For discretization, we separate the two integral operators that appear in \eqref{eq:4.7}, building the blocks
\begin{subequations}
\begin{eqnarray}
\mathrm W^\pm_{ij,0}(s)
	&:=& \boldsymbol W_0(\bb_i^\pm-\bb_j;s),\\
\mathrm W^\pm_{ij,1}(s)
	&:=& \boldsymbol W_1(\mm_i^\pm-\mm_j,\nn_i^\pm,\nn_j;s),
\end{eqnarray}
\end{subequations}
and then mix them using the expression
\begin{equation}
\mathrm W_h(s) :=
	\mathrm D (\mathrm P^+\mathrm W_{0,h}^+(s)+\mathrm P^-\mathrm W_{0,h}^-(s))\mathrm D^\top
	 +\mathrm Q(\mathrm P^+\mathrm W_{1,h}^+(s)+\mathrm P^-\mathrm W_{1,h}^-(s))\mathrm Q.
\end{equation}

\section{Discrete treatment of Hilbert transforms}\label{sec:5}

In this section we justify the choice of parameters that are implicit to the definition of the mixing matrices \eqref{eq:2.4}. We recall that the paper \cite{DoLuSa:2014b} allowed for a one-parameter dependent family of test functions. The choice that produced the fork-and-ziggurat testing elements is the only one that works for the elasticity problem. The main element that appears in the elasticity integral operators $\mathbf K$ and $\mathbf J$ and does not appear in their acoustic counterparts is the periodic Hilbert transform:
\begin{equation}
(H\phi)(t):=\mathrm{p.v.}\int_0^1 \cot (\pi(t-\tau))\phi(\tau)\mathrm d \tau.
\end{equation}
(We note that it is customary to multiply this transform by the imaginary unit to relate it to the Hilbert transform on a circle.) For properties of this operator, we refer the reader to \cite[Section 5.7]{SaVa:2002}. We will use that the periodic Hilbert transform commutes with differentiation, i.e., $(H\phi)'=H\phi'$, which follows from an easy integration by parts argument. A key result related to trapezoidal approximation of the Hilbert transform is given next. It is an easy consequence of a Lemma \ref{lemma:B.2} proved in Appendix \ref{app:B}.

\begin{proposition}\label{prop:5.1}
Let $t_j:=j\,h$ with $h:=1/N$ and consider the following subspaces of trigonometric polynomials
\[
\mathbb T_h:=\mathrm{span}\{ e_n\,:\, -N/2\le n<N/2\}, \qquad e_n(t):=\exp(2\pi\imath n t). 
\]
Then
\begin{equation}\label{eq:5.2}
h \sum_{j=1}^N \cot(\pi(t-t_j))\phi(t_j)= (H \phi)(t)+\cot(\pi t/h) \phi(t) \qquad \forall \phi\in \mathbb T_h.
\end{equation}
\end{proposition}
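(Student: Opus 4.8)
The plan is to verify \eqref{eq:5.2} on the Fourier basis and then conclude by linearity. Since both sides are linear in $\phi$ and $\mathbb T_h=\mathrm{span}\{e_n:-N/2\le n<N/2\}$, it suffices to treat $\phi=e_n$ for a single such $n$. For these frequencies the right-hand side is $(He_n)(t)+\cot(\pi t/h)\,e^{2\pi\imath nt}$, so I first record the spectral action of $H$, namely $(He_n)(t)=-\imath\,\mathrm{sgn}(n)\,e^{2\pi\imath nt}$ (with $\mathrm{sgn}(0)=0$), which is the standard diagonalization of the periodic Hilbert transform; see \cite[Section 5.7]{SaVa:2002}. If one prefers a self-contained check, the substitution $s=t-\tau$ reduces $(He_n)(t)$ to $e^{2\pi\imath nt}\,\mathrm{p.v.}\int_0^1\cot(\pi s)e^{-2\pi\imath ns}\mathrm ds$; the odd symmetry $s\mapsto 1-s$ annihilates the real part, while via $2\cos(\pi s)\sin(2\pi ns)=\sin(\pi(2n+1)s)+\sin(\pi(2n-1)s)$ and the Dirichlet-kernel identity the imaginary part becomes the integral of a trigonometric polynomial with constant term one, giving $-\imath$ for $n\ge1$ and its conjugate for $n\le-1$.

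For the left-hand side I rewrite the kernel as $\cot(\pi\alpha)=\imath+2\imath/(e^{2\pi\imath\alpha}-1)$. Setting $w:=e^{2\pi\imath t}$ and letting $\zeta=e^{2\pi\imath jh}$ run over the $N$-th roots of unity as $j=1,\dots,N$, one has $\cot(\pi(t-jh))=\imath+2\imath\zeta/(w-\zeta)$, whence
\[
h\sum_{j=1}^N \cot(\pi(t-jh))\,e^{2\pi\imath n jh}
= \imath h\sum_{j=1}^N e^{2\pi\imath n jh} + 2\imath h\sum_{\zeta^N=1}\frac{\zeta^{\,n+1}}{w-\zeta}.
\]
The first sum equals $\imath h N=\imath$ when $n\equiv0$ and vanishes otherwise, so it only contributes the value $\imath$ in the case $n=0$. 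The second sum is evaluated in closed form from $1/(w-\zeta)=\sum_{p\ge0}\zeta^p w^{-p-1}$ and $\sum_{\zeta^N=1}\zeta^q=N\,[\,N\mid q\,]$, yielding
\[
\sum_{\zeta^N=1}\frac{\zeta^{\,m}}{w-\zeta}=\frac{N\,w^{\tilde m-1}}{w^N-1},
\]
where $\tilde m\in\{1,\dots,N\}$ is the residue of $m$ modulo $N$.

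The decisive and most delicate step is the reduction of the exponent $m=n+1$ modulo $N$, where the band-limiting hypothesis $-N/2\le n<N/2$ is used essentially. It forces $\tilde m=n+1$ when $n\ge0$ but $\tilde m=n+1+N$ when $n<0$, and this single shift by $N$ is exactly what manufactures the factor $\mathrm{sgn}(n)$ in the final formula. Carrying it through, and recalling $h=1/N$, the second sum equals $2\imath\,w^{n}/(w^N-1)$ for $n\ge0$ and $2\imath\,w^{n+N}/(w^N-1)$ for $n<0$.

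Finally I re-expand $\cot(\pi t/h)=\imath+2\imath/(w^N-1)$ on the right-hand side, noting $w^N=e^{2\pi\imath t/h}$, and compare the three cases directly. For $n>0$ the target $-\imath w^n+(\imath+2\imath/(w^N-1))w^n$ collapses to $2\imath w^n/(w^N-1)$; for $n<0$ the target $+\imath w^n+(\imath+2\imath/(w^N-1))w^n$ collapses to $2\imath w^{n+N}/(w^N-1)$; and for $n=0$ both the discrete sum (now including the $\imath$ from the first term) and the right-hand side equal $\cot(\pi t/h)$. In every case the closed form of the discrete sum matches the right-hand side, which proves \eqref{eq:5.2}. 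This bookkeeping is precisely what Lemma \ref{lemma:B.2} encapsulates, so the proposition also follows directly from that lemma; the route sketched here is an independent elementary verification.
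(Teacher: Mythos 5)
Your argument is correct, and it reaches \eqref{eq:5.2} by a genuinely different route than the paper. Both proofs reduce to the Fourier modes $e_n$ by linearity, but the paper's Lemma \ref{lemma:B.2} never diagonalizes $H$: it subtracts the singularity by forming $s_n(\tau;t)=\cot(\pi(t-\tau))(e_n(\tau)-e_n(t))$, shows via the telescoping identity $\imath\cot(\pi\tau)(1-e_n(\tau))=1+2e_1(\tau)+\dots+2e_{n-1}(\tau)+e_n(\tau)$ that $s_n(\cdot;t)\in\mathbb T_{N-1}$, and then invokes exactness of the trapezoidal rule on $\mathbb T_{N-1}$; the aliasing term $\cot(\pi t/h)e_n(t)$ drops out as $e_n(t)$ times the $n=0$ sum, which is obtained by differentiating the logarithmic summation identity \eqref{eq:B.1}. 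You instead evaluate both sides in closed form as rational functions of $w=e^{2\pi\imath t}$, using $\cot(\pi\alpha)=\imath+2\imath/(e^{2\pi\imath\alpha}-1)$, the root-of-unity filter, and the eigenvalue relation $He_n=-\imath\,\mathrm{sgn}(n)e_n$. Your version is more computational but makes completely explicit where the band limit $-N/2\le n<N/2$ enters (it fixes the residue $\tilde m$ of $n+1$ modulo $N$ and thereby produces the $\mathrm{sgn}(n)$), whereas the paper's version is shorter and explains structurally why the correction term is exactly $\cot(\pi t/h)\phi(t)$. One small point to tighten: for $|w|=1$ the expansion $1/(w-\zeta)=\sum_{p\ge0}\zeta^pw^{-p-1}$ is not absolutely convergent, so you should either derive $\sum_{\zeta^N=1}\zeta^m/(w-\zeta)=Nw^{\tilde m-1}/(w^N-1)$ for $|w|>1$ and extend it to all $w$ with $w^N\neq1$ as an identity of rational functions, or obtain it directly by partial fractions (the residue of the right-hand side at $w=\zeta$ is $\zeta^{\tilde m-1}/\zeta^{N-1}=\zeta^m$). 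With that caveat addressed, the verification is complete and self-contained.
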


Note that both sides of \eqref{eq:5.2} blow up when $t/h\in \mathbb Z$. We now specialize this formula to $t=t_i^\pm:=(i\pm\frac16)h$. For simplicity we will write $\alpha=\cot(\pi/6)=\sqrt3$. Using a density argument and Proposition \ref{prop:5.1}, it is easy to see that for $\phi\in \mathcal D:=\{ \phi\in \mathcal C^\infty(\mathbb R)\,:\,\phi(1+\cdot)=\phi\}$, we have
\begin{equation}\label{eq:5.3}
h \sum_{j=1}^N \cot(\pi(t_i^\pm-t_j))\phi(t_j)= (H \phi)(t_i^\pm)\pm\alpha \phi(t_i^\pm)+\mathcal O(h^m)\qquad \forall m\in \mathbb Z. 
\end{equation}
In \eqref{eq:5.3} and in the sequel, the Landau symbol will be used with the following precise meaning: $a_i=b_i+\mathcal O(h^m)$ denotes the existence of $C>0$ independent of $h$ such that that $\max_i |a_i-b_i|\le C h^m$.
Motivated by the quadrature formula \eqref{eq:2.AA}, we introduce the averaging operator
\begin{equation}
\Delta_h\phi:=\smallfrac1{24}\phi(\cdot-h)+\smallfrac{11}{12}\phi+\smallfrac1{24}\phi(\cdot+h),
\end{equation}
and note that
\begin{equation}\label{eq:5.5}
(\Delta_h\phi)(t_j)=\phi(t_j)+\frac{h^2}{24}\phi''(t_j)+\mathcal O(h^4) \qquad \forall \phi\in \mathcal D.
\end{equation}
Additionally, we consider the space of periodic piecewise constant functions
\[
S_h:= \{ \phi_h:\mathbb R\to \mathbb R\,:\, 
		\phi_h(1+\cdot)=\phi_h, \quad 
		\phi_h|_{(t_i-h/2,t_i+h/2)}\in \mathbb P_0\quad\forall i\}
\]
and the operator $D_h:\mathcal D \to S_h$ given by
\begin{equation}\label{eq:5.6}
D_h\phi\in S_h, \qquad \widehat{D_h\phi}(\mu)=\widehat\phi(\mu), \qquad -N/2\le \mu< N/2.
\end{equation}
This operator is well defined since periodic piecewise constant functions on a regular grid can be determined by any sequence of $N$ consecutive Fourier coefficients. (This fact seems to be known since at least the 30s of the past century \cite{QuCo:1938}, as mentioned in \cite{ArWe:1985}.) In particular, this operator has excellent approximation properties in a wide range of negative Sobolev norms (see \cite[Theorem 5.1]{Arnold:1983}). In \cite[Corollary 3.2]{DoSa:2001a} it is proved that
\begin{equation}\label{eq:5.7}
(D_h\phi)(t_j)=\phi(t_j)-\frac{h^2}{24}\phi''(t_j)+\mathcal O(h^4)\qquad \forall \phi\in \mathcal D.
\end{equation}
The following approximations of the periodic Hilbert transform
\begin{equation}\label{eq:5.8}
(H_h\phi)(t):=h\sum_{j=1}^N \cot(\pi(t-t_j))(\Delta_h\phi)(t_j).
\end{equation}
are relevant for our method, since for $\phi_h\in S_h$, we can write and approximate with \eqref{eq:2.AA}:
\begin{eqnarray*}
(H\phi_h)(t)&=&\sum_{j=1}^N \phi_h(t_j)\mathrm{p.v.}\int_{t_j-h/2}^{t_j+h/2}\cot(\pi(t-\tau))\mathrm d\tau \\
	&\approx & h\sum_{j=1}^N \phi_h(t_j) (\Delta_h\cot(\pi(t-\cdot)))(t_j) = (H_h\phi_h)(t), \qquad t/h\not\in\mathbb Z.
\end{eqnarray*}
 This means that using piecewise constant functions as trial functions, and following the idea of using look-around quadrature for integrals over intervals of length $h$ leads naturally to the operator $H_h$.
Note now that
\begin{equation}\label{eq:5.9}
\max_j|\cot(\pi(t_i^\pm-t_j))|=\mathcal O(h^{-1}).
\end{equation}
Therefore
\begin{alignat*}{6}
(H_hD_h\phi)(t_i^\pm)
	=& (H_h\phi)(t_i^\pm)-\frac{h^2}{24}(H_h \phi'')(t_i^\pm)+\mathcal O(h^3) 
		&\qquad &\mbox{(by \eqref{eq:5.7} and \eqref{eq:5.9})}\\
	=& h\sum_{j=1}^N \cot(\pi (t_i^\pm-t_j)) \phi(t_j)+\mathcal O(h^3)
	      &\qquad &\mbox{(by \eqref{eq:5.5} and \eqref{eq:5.9})}\\
	=& (H\phi)(t_i^\pm)\pm\alpha \phi(t_i^\pm)+\mathcal O(h^3)
		&\qquad &\mbox{(by \eqref{eq:5.3})},
\end{alignat*}
which, using Taylor expansions, implies that
\[
\smallfrac12 (H_hD_h\phi)(t_i^-)+\smallfrac12 (H_hD_h\phi)(t_i^+)=
\smallfrac12 (H\phi)(t_i^-)+\smallfrac12 (H\phi)(t_i^+)
		+\smallfrac16 h \alpha \phi'(t_i)+\mathcal O(h^3).
\]
Similarly
\[
\smallfrac12 (H_hD_h\phi)(t_{i+1}^-)+\smallfrac12 (H_hD_h\phi)(t_{i-1}^+)
	=\smallfrac12 (H\phi)(t_{i+1}^-)+\smallfrac12 (H\phi)(t_{i-1}^+)
	-\smallfrac56 h \alpha \phi'(t_i)+\mathcal O(h^3).
\]
If we then denote (see \eqref{eq:2.8})
\begin{equation}
\langle \varphi,\delta_i^\star\rangle:=\frac{a}2 (\varphi(t_i^-)+\varphi(t_i^+))
	+\frac{1-a}2(\varphi(t_{i-1}^+)+\varphi(t_{i+1}^-))
\end{equation}
it follows that
\[
\langle H_hD_h\phi, \delta_i^\star\rangle=\langle H \phi,\delta_i^\star\rangle+
\alpha h \frac{6a-5}6 \phi'(t_i)+\mathcal O(h^3),
\]
which shows that $a=5/6$ is the only choice that provides third order approximation.

\section{Experiments in the frequency domain}\label{sec:6}

In this section we show two numerical experiments, based on different integral equations, for the interior Dirichlet problem. The domain is the ellipse $(\frac{x}4)^2+(\frac{y}3)^2=1$ and we choose $\lambda=5$, $\mu=3$, and $\rho=2.5$ as physical parameters. We fix the wave number to be $k=3$ (so $s=-3\imath$ in our Laplace-domain based notation) and take data so that the exact solution is the sum of a pressure and a shear wave:
\begin{equation}\label{eq:6.0}
\bs U(\mathbf z):=
	\left(e^{\imath k (\mathbf z\cdot \mathbf d) /c_L}+
		e^{\imath k (\mathbf z\cdot \mathbf d^\perp) /c_T}\right)\mathbf d,
\qquad \mathbf d=(\smallfrac1{\sqrt2},\smallfrac1{\sqrt2}),
\qquad \mathbf d^\perp=(-\smallfrac1{\sqrt2},\smallfrac1{\sqrt2}),
\end{equation}
with $c_L$ and $c_T$ defined in \eqref{eq:3.A}. The solution will be observed in ten interior points $\mathbf z_i^{\mathrm{obs}}$, $i=1,\ldots,10$, randomly chosen in the circle $x^2+y^2=4$. The Dirichlet data is sampled using 
\eqref{eq:3.7}-\eqref{eq:3.8} to a vector $\boldsymbol\beta_0$. We are going to use a direct formulation, where the discrete elastic wave-field is given by the representation
\begin{equation}\label{eq:6.1}
\bs U_h (\mathbf z)=\mathrm S_h(s;\mathbf z)\bs\lambda-\mathrm D_h(s;\mathbf z)\bs\varphi.
\end{equation}
Since we are dealing with the Dirichlet problem, the computation of $\bs\varphi$ is rather straightforward: we just need to solve the sparse linear system (recall \eqref{eq:mass}),
\begin{equation}\label{eq:6.2}
\mathrm M \bs\varphi=\bs\beta_0,
\end{equation} 
which is then postprocessed to create the effective density (see \eqref{eq:3.6})
\begin{equation}\label{eq:6.3}
\bs\varphi^{\mathrm{eff}}:=\mathrm Q\bs\varphi.
\end{equation}
The vector $\bs\lambda$ (which approximates the normal traction) will be computed using a first kind discrete integral equation (based on the first integral identity provided by the Calder\'on projector)
\begin{equation}\label{eq:6.4}
\mathrm V_h(s)\bs\lambda=(\smallfrac12\mathrm M+\mathrm K_h(s))\bs\varphi,
\end{equation}
or a second kind integral equation (based on the second integral identity)
\begin{equation}\label{eq:6.5}
(-\smallfrac12\mathrm M+\mathrm J_h(s))\bs\lambda=-\mathrm W_h(s)\bs\varphi.
\end{equation}
We compute the following errors:
\begin{subequations}\label{eq:6.6}
\begin{eqnarray}
E_h^{\bs U} &:=& 
	\frac{\max_{i=1}^{10} | \bs U_h(\mathbf z_i^{\mathrm{obs}})-\bs U(\mathbf z_i^{\mathrm{obs}})|}
		{\max_{i=1}^{10} | \bs U(\mathbf z_i^{\mathrm{obs}})|},\\
E_h^{\bs\lambda} &:=& 
	\frac{\max_{i=1}^N |\bs\lambda_i-\bs\sigma(\bs U)(\mm_i)\cdot\nn_i|}
		{\max_{i=1}^N |\bs\sigma(\bs U)(\mm_i)\cdot\nn_i|},\\
E_h^{\bs\varphi} &:=& 
	\frac{\max_{i=1}^N |\bs\varphi_i^{\mathrm{eff}}-\bs U(\mm_i)|}
		{\max_{i=1}^N |\bs U(\mm_i)|}.
\end{eqnarray}
\end{subequations}
The expected convergence orders are
\[
E_h^{\bs U}=\mathcal O(N^{-3}), \qquad E_h^{\bs\lambda}=\mathcal O(N^{-3}),
\qquad E_h^{\bs\varphi}=\mathcal O(N^{-4}),
\]
where it has to be noted that $\bs\varphi$ is not obtained as a solution of an integral equation, but just projected from sampled data, which explains the higher order of convergence.  The errors are reported in Tables \ref{table:1} and \ref{table:2} and plotted in Figures \ref{figure:1} and \ref{figure:2}.

\begin{table}[h]\centering
\begin{tabular}{ccccccc}
\hline
\multicolumn{1}{|c|}{$N$} & \multicolumn{1}{c|}{$E^{\bs U}_{h}$} & \multicolumn{1}{c|}{e.c.r.} & \multicolumn{1}{c|}{$E^{\bs\lambda}_h$} & \multicolumn{1}{c|}{e.c.r.} & \multicolumn{1}{c|}{$E^{\bs\phi}_h$} & \multicolumn{1}{c|}{e.c.r.} \\ \hline
 30  	&  1.836   		  & ---   & 2.185   		     & ---   & 5.057 $\times 10^{-3}$  & ---           \\ \hline
 60 	&  3.338 $\times 10^{-2}$ & 5.761 & 2.682 $\times 10^{-2}$   & 6.348 & 1.212 $\times 10^{-4}$  & 5.382           \\ \hline
 120 	&  2.941 $\times 10^{-3}$ & 3.542 & 2.705 $\times 10^{-3}$   & 3.309 & 1.451 $\times 10^{-5}$  & 3.063           \\ \hline
 240 	&  4.352 $\times 10^{-4}$ & 2.757 & 3.224 $\times 10^{-4}$   & 3.069 & 1.011 $\times 10^{-6}$  & 3.843           \\ \hline
 480	&  5.640 $\times 10^{-5}$ & 2.948 & 3.983 $\times 10^{-5}$   & 3.017 & 6.484 $\times 10^{-8}$  & 3.963          \\ \hline
 960 	&  7.020 $\times 10^{-6}$ & 3.006 & 4.958 $\times 10^{-6}$   & 3.006 & 4.077 $\times 10^{-9}$  & 3.991           \\ \hline
 1920 	&  8.783 $\times 10^{-7}$ & 2.998 & 6.199 $\times 10^{-7}$   & 3.000 & 2.552 $\times 10^{-10}$ & 3.998             
\end{tabular}
\caption{Relative errors \eqref{eq:6.6} and estimated convergence rates in the frequency domain for $\bs U$, $\bs\lambda$ and $\bs\phi$ using equation \eqref{eq:6.4}.}\label{table:1}
\end{table}

\begin{table}[h]\centering
\begin{tabular}{ccccccc}
\hline
\multicolumn{1}{|c|}{$N$} & \multicolumn{1}{c|}{$E^{\bs U}_{h}$} & \multicolumn{1}{c|}{e.c.r.} & \multicolumn{1}{c|}{$E^{\bs\lambda}_h$} & \multicolumn{1}{c|}{e.c.r.} & \multicolumn{1}{c|}{$E^{\bs\phi}_h$} & \multicolumn{1}{c|}{e.c.r.} \\ \hline
 30  	&  3.317 $\times 10^{-1}$ & ---   & 5.806 $\times 10^{-1}$   & ---   & 5.057 $\times 10^{-3}$  & ---           \\ \hline
 60 	&  5.785 $\times 10^{-2}$ & 2.519 & 6.836 $\times 10^{-2}$   & 3.086 & 1.212 $\times 10^{-4}$  & 5.382           \\ \hline
 120 	&  7.271 $\times 10^{-3}$ & 2.993 & 7.620 $\times 10^{-3}$   & 3.165 & 1.451 $\times 10^{-5}$  & 3.063           \\ \hline
 240 	&  8.923 $\times 10^{-4}$ & 3.026 & 8.635 $\times 10^{-4}$   & 3.142 & 1.011 $\times 10^{-6}$  & 3.843           \\ \hline
 480	&  8.087 $\times 10^{-5}$ & 3.464 & 1.030 $\times 10^{-5}$   & 3.068 & 6.484 $\times 10^{-8}$  & 3.963          \\ \hline
 960 	&  1.004 $\times 10^{-5}$ & 3.009 & 1.256 $\times 10^{-6}$   & 3.035 & 4.077 $\times 10^{-9}$  & 3.991           \\ \hline
 1920 	&  1.473 $\times 10^{-6}$ & 2.769 & 1.605 $\times 10^{-7}$   & 2.969 & 2.552 $\times 10^{-10}$ & 3.998             
\end{tabular}
\caption{Relative errors \eqref{eq:6.6} and estimated convergence rates in the frequency domain for $\bs U$, $\bs\lambda$ and $\bs\phi$ using equation \eqref{eq:6.5}.}\label{table:2}
\end{table}

\begin{figure}[h]
\begin{center}
\includegraphics[width=9cm]{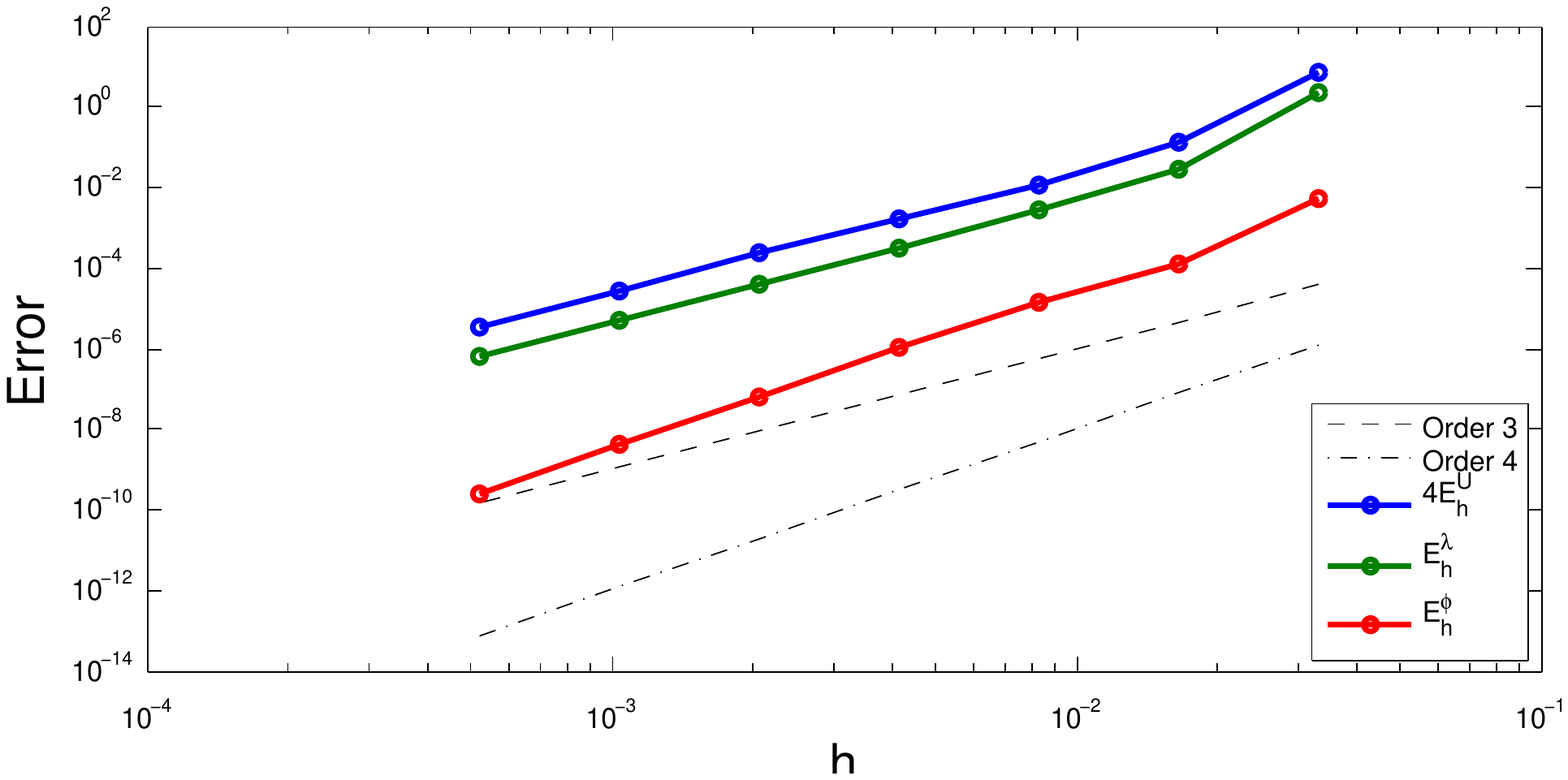}
\end{center}
\caption{Errors corresponding to Table \ref{table:1}. The error $E_h^{\bs U}$ has been rescaled to separate the error graphs.} \label{figure:1}
\end{figure}

\begin{figure}[h]
\begin{center}
\includegraphics[width=9cm]{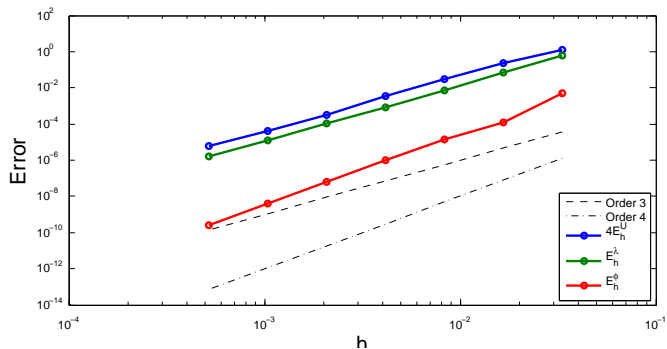}
\end{center}
\caption{Errors corresponding to Table \ref{table:2}. The error $E_h^{\bs U}$ has been rescaled to separate the error graphs.} \label{figure:2}
\end{figure}

\section{Experiments in the time domain}\label{sec:7}

In this section we show some tests on how to use the previously devised Calder\'on calculus for elastic waves in the Laplace/frequency domain to simulate transient waves by using a Convolution Quadrature method. We will be using an order two (BDF2-based) Convolution Quadrature strategy. For theoretical aspects of multistep CQ methods applied to wave propagation, the reader is referred to \cite{Lubich:1994, LaSa:2009, Sayas:2014}. Practical aspects on the implementation of CQ can be found in \cite{Banjai:2010, BaSc:2012, HaSa:2014}. A concise explanation on how to use CQ for an acoustic wave propagation problem discretized with a method that is similar to the one in this paper is given in \cite{DoLuSa:2014b}.  For the purposes of exposition we can present CQ as  two blackboxes. In the forward CQ, a sequence of vectors $\bs\beta^n\in \mathbb R^{n_1}$ (for $n\ge 0$) is input, a  $\mathbb C^{n_2\times n_1}$-valued transfer operator $\mathrm A(s)$ is given, and another sequence of vectors $\bs\delta^n\in \mathbb R^{n_2}$ is output. We will denote this as 
\[
\bs\delta^n=\mathrm{CQfwd}(\mathrm A(s),\bs\beta^n),
\]
with the implicit understanding that a particular time value $\bs\delta^n$ depends on the past of the input $\bs\beta^m$ for $m\le n$. 
We note that even if CQ can be understood and implemented as a time-stepping (marching-on-in-time) method, these small experiments are carried out using an all-times-at-once strategy \cite{BaSc:2012} involving evaluations of the transfer function $\mathrm A(s)$ at many different complex frequencies. The second kind of CQ blackbox takes input $\bs\beta^n\in \mathbb R^r$, uses a invertible transfer operator $\mathrm A(s)\in \mathbb C^{r\times r}$, and outputs another sequence $\bs\delta^n\in \mathbb R^r$. This convolution equation process is equivalent to the forward process applied to the inverse transfer function
\[
\bs\delta^n=\mathrm{CQeqn}(\mathrm A(s),\bs\beta^n)=\mathrm{CQfwd}(\mathrm A(s)^{-1},\bs\beta^n),
\]
and can be implemented in different ways, either by solving equations related to $\mathrm A(s)$ at many complex frequencies or by repeatedly inverting $\mathrm A(c_0)$ for a large positive value $c_0$ (which corresponds to a highly diffusive equation). 

\paragraph{Errors.}
We use the same geometry and physical parameters as in the experiments of Section \ref{sec:6}. We solve an interior problem with prescribed Dirichlet (resp. Neumann) boundary condition taken so that the exact solution is the plane pressure wave
\begin{equation}\label{eq:7.1}
\bs U(\mathbf z,t)=H(c_L(t-t_0)-\mathbf z\cdot\mathbf d) \sin(2 (c_L(t-t_0)-\mathbf z\cdot\mathbf d))\mathbf d,
\end{equation}
with $\mathbf d=(1/\sqrt2,1/\sqrt2)$ and $t_0=2.3.$ Here $H$ is a smoothened version of the Heaviside function. The problem is integrated from $0$ to $T=3$, on a space grid with $N$ points,  with $M$ time steps of length $T/M$: $t_n=n\,T/M$. Relative errors for the solution are computed at the observation points $\mathbf z_i^{\mathrm{obs}}$ (those of Section \ref{sec:6}). For the Dirichlet problem we use a single layer potential representation of the solution: using \eqref{eq:3.7}-\eqref{eq:3.8} we build sample vectors $\bs\beta_0^n$ from $\gamma\bs U(\cdot,t_n)$, we then solve the discrete time-domain integral equations
\[
\bs\eta^n=\mathrm{CQeqn}(\mathrm V_h(s),\bs\beta_0^n)
\]
and finally postprocess the discrete densities to compute approximations of the displacement field at the the observation points:
\[
\bs U_h^n(\mathbf z_i^{\mathrm{obs}})=\mathrm{CQfwd}(\mathrm S_h(s;\mathbf z_i^{\mathrm{obs}}),\bs\eta^n).
\]
For the Neumann problem we create sample vectors $\bs\beta_1^n$ from $\boldsymbol t(\bs U)(\cdot,t_n)$, solve the equations
\[
\bs\psi^n=-\mathrm{CQeqn}(\mathrm W_h(s),\bs\beta_1^n),
\]
and postprocess at the observation points
\[
\bs U_h^n(\mathbf z_i^{\mathrm{obs}})=\mathrm{CQfwd}(\mathrm D_h(s;\mathbf z_i^{\mathrm{obs}}),\bs\psi^n).
\]
In this experiment we fix a space discretization with $n=500$ points and refine in the number of time steps $M$. The results are reported in Table \ref{table:3}. The expected order two convergence of CQ is observed until the error due to space discretization dominates. Similar results refining in time and space can be produced in the same way.

\begin{table}[h]\centering
\begin{tabular}{ccccc}
\hline
\multicolumn{1}{|c|}{$M\phantom{\Big|}$} & \multicolumn{1}{c|}{$E^{\bs U}_{h}$ Dirichlet} & \multicolumn{1}{c|}{e.c.r.} & \multicolumn{1}{c|}{$E^{\bs U}_{h}$ Neumann} & \multicolumn{1}{c|}{e.c.r.} \\ \hline
 50  	&  2.828 $\times 10^{-1}$ & ---   & 2.676 $\times 10^{-1}$   & ---   \\ \hline
 100 	&  1.711 $\times 10^{-1}$ & 0.725 & 1.662 $\times 10^{-1}$   & 0.686 \\ \hline
 200 	&  5.181 $\times 10^{-2}$ & 1.724 & 5.205 $\times 10^{-2}$   & 1.675 \\ \hline
 400 	&  1.420 $\times 10^{-2}$ & 1.878 & 1.401 $\times 10^{-2}$   & 1.893 \\ \hline
 800	&  3.070 $\times 10^{-3}$ & 2.199 & 2.983 $\times 10^{-3}$   & 2.232 \\ \hline
 1600 	&  6.171 $\times 10^{-4}$ & 2.313 & 6.124 $\times 10^{-4}$   & 2.284             
\end{tabular}
\caption{Relative errors and estimated convergence rates in the time domain for the displacement at the final time $T = 3$ with  Dirichlet and Neumann boundary conditions. The first column shows the number of time steps, 500 discretization points in space were used. }\label{table:3}
\end{table}

\paragraph{An illustrative experiment.} We next show the capabilities of the method for the discretization of the scattering of an elastic wave by three obstacles. The obstacles are three disks with boundaries:
\[
(x-1)^2+(y-1)^2=1,
\qquad
(x-3)^2+(y-3)^2=1,
\qquad
(x-3.5)^2+(y-0.4)^2=1.
\]
For an incident wave $\bs U^{\mathrm{inc}}(\mathbf z,t)$, we use the pressure wave given in \eqref{eq:7.1}.
We look for a causal displacement field satisfying the elastic wave equation \eqref{eq:1.3} with boundary condition $\gamma\bs U+\gamma\bs U^{\mathrm{inc}}=\bs 0$ at all times. We use a direct formulation that we next explain in the Laplace domain. Data are sampled in space (with 200 points per obstacle) using \eqref{eq:3.7}-\eqref{eq:3.8} at equally spaced time-steps of length $k=28/1200$ and stored in vectors $\bs\beta_0^n$. These data have first to be projected on a space of traces $\bs\varphi^n=-\mathrm M^{-1}\bs\beta_0^n$ and then
run through a second hand integral operator in order to build the right-hand-side of the equations,
\[
\bs\xi^n=-\smallfrac12 \mathrm M\bs\varphi^n+\mathrm{CQfwd}(\mathrm K_h(s),\bs\varphi^n).
\]
(Compare with \eqref{eq:6.2} and \eqref{eq:6.4} and note the different sign due to the fact that we are solving an exterior problem.)
An approximation of the normal stresses is then computed by solving
\[
\bs\lambda^n=\mathrm{CQeqn}(\mathrm V_h(s),\bs\xi^n).
\]
Finally the solution is evaluated at a large number of observation points $\mathbf z_i$ using two potentials
\[
\bs U^n(\mathbf z_i)=\mathrm{CQfwd}(\mathrm D_h(s;\mathbf z_i),\bs\varphi^n)-
					\mathrm{CQfwd}(\mathrm S_h(s;\mathbf z_i),\bs\lambda^n).
\]
We show four snapshots of the solution, which is asymptotically reaching the time-harmonic regime. We plot the absolute value of the displacement. The scattered pressure and shear waves can be distinguished by the different speeds of propagation that appear already after the wave hits the first obstacle.

\begin{figure}[h]
\begin{center}
\begin{tabular}{cc}
\includegraphics[width=7cm]{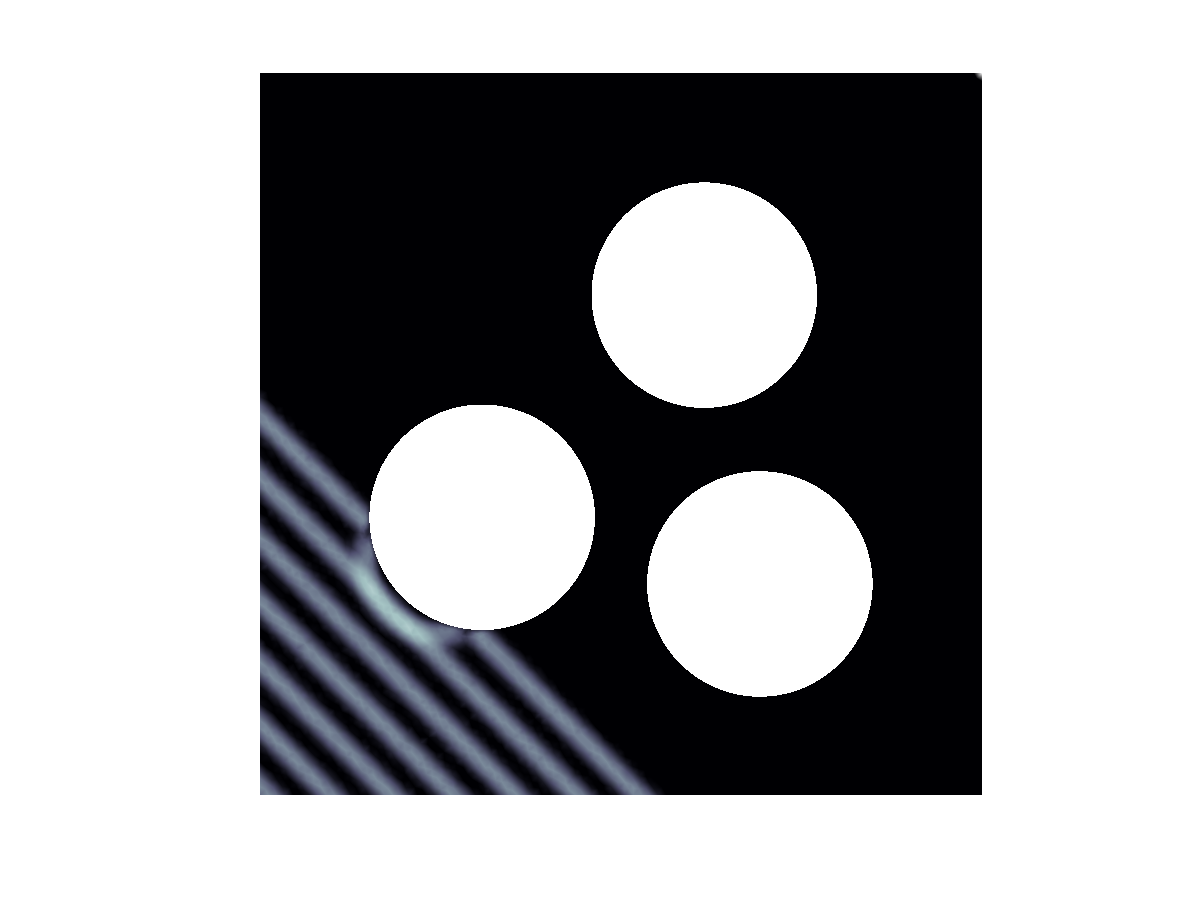} & \includegraphics[width=7cm]{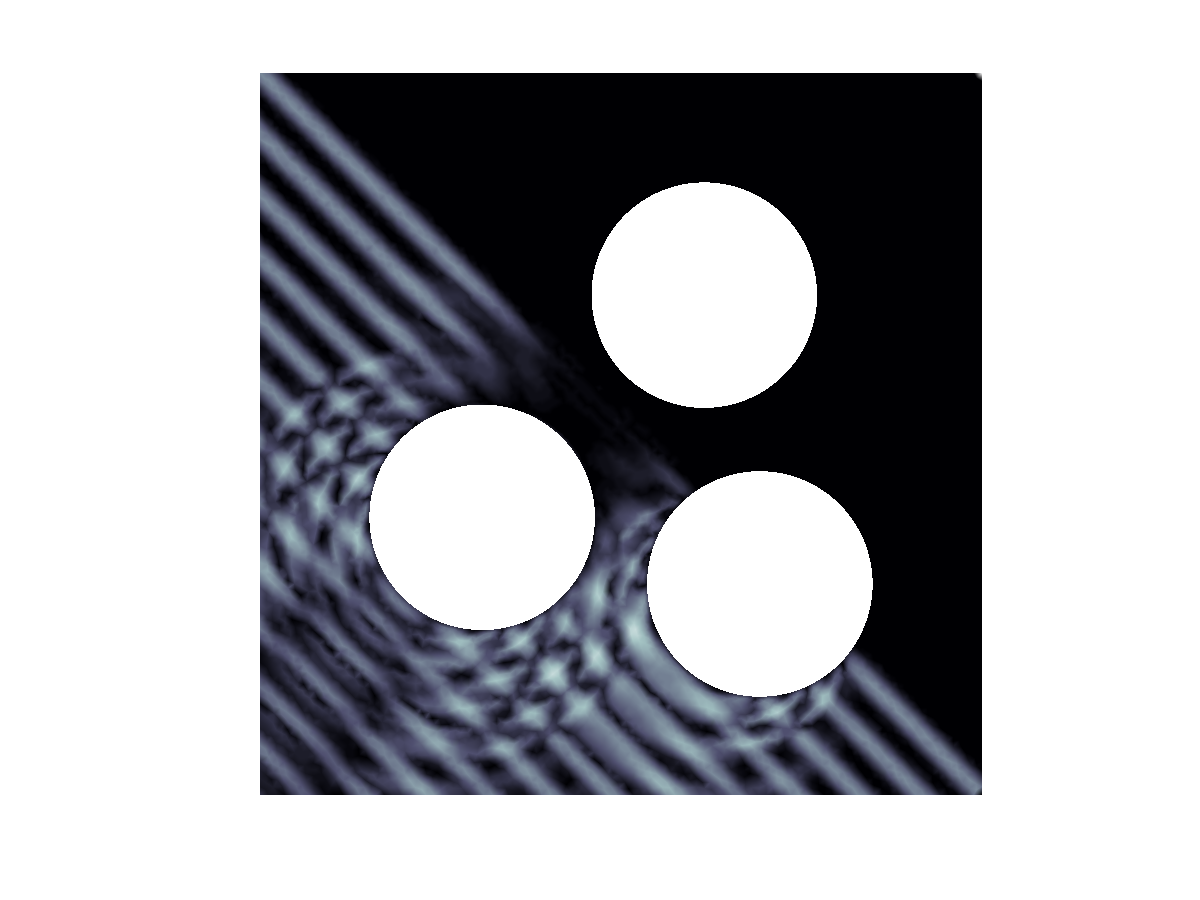} \\
\includegraphics[width=7cm]{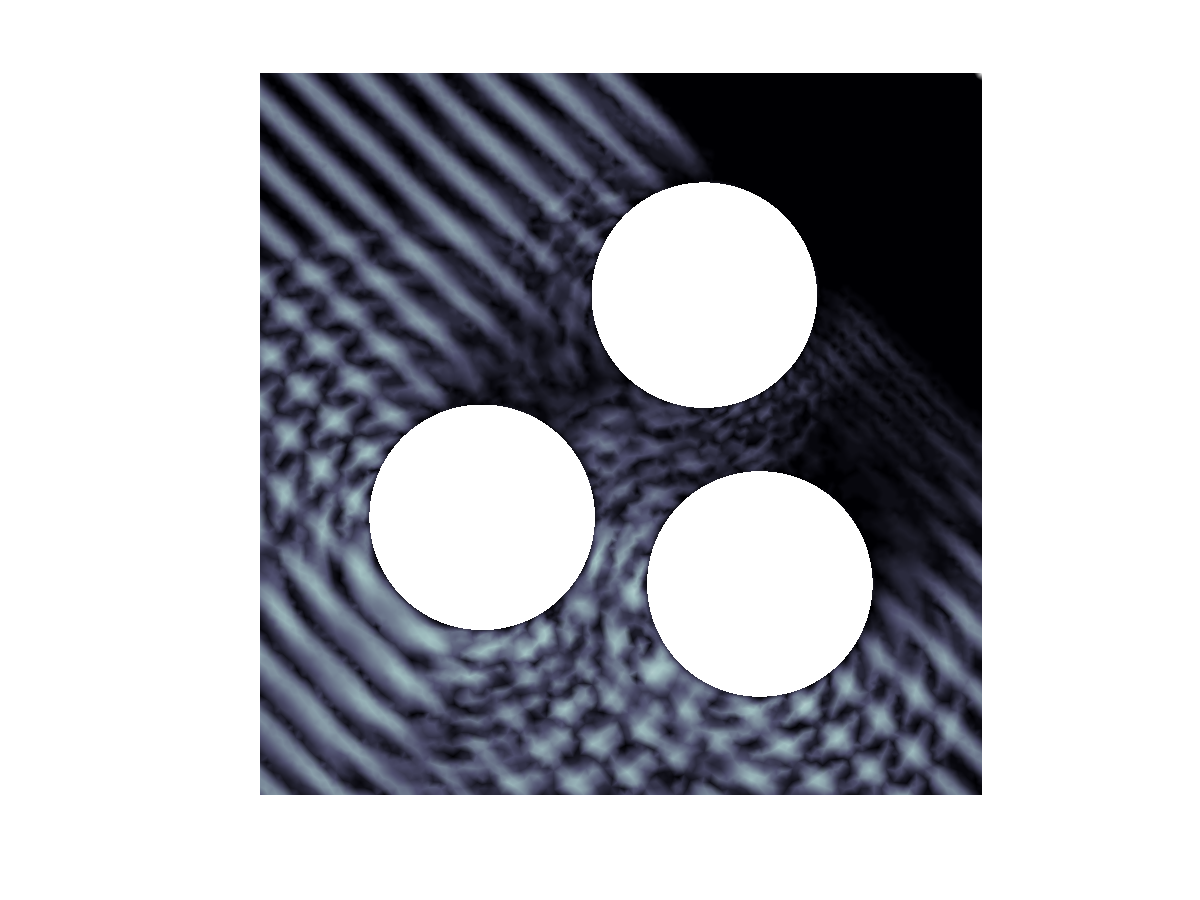} & \includegraphics[width=7cm]{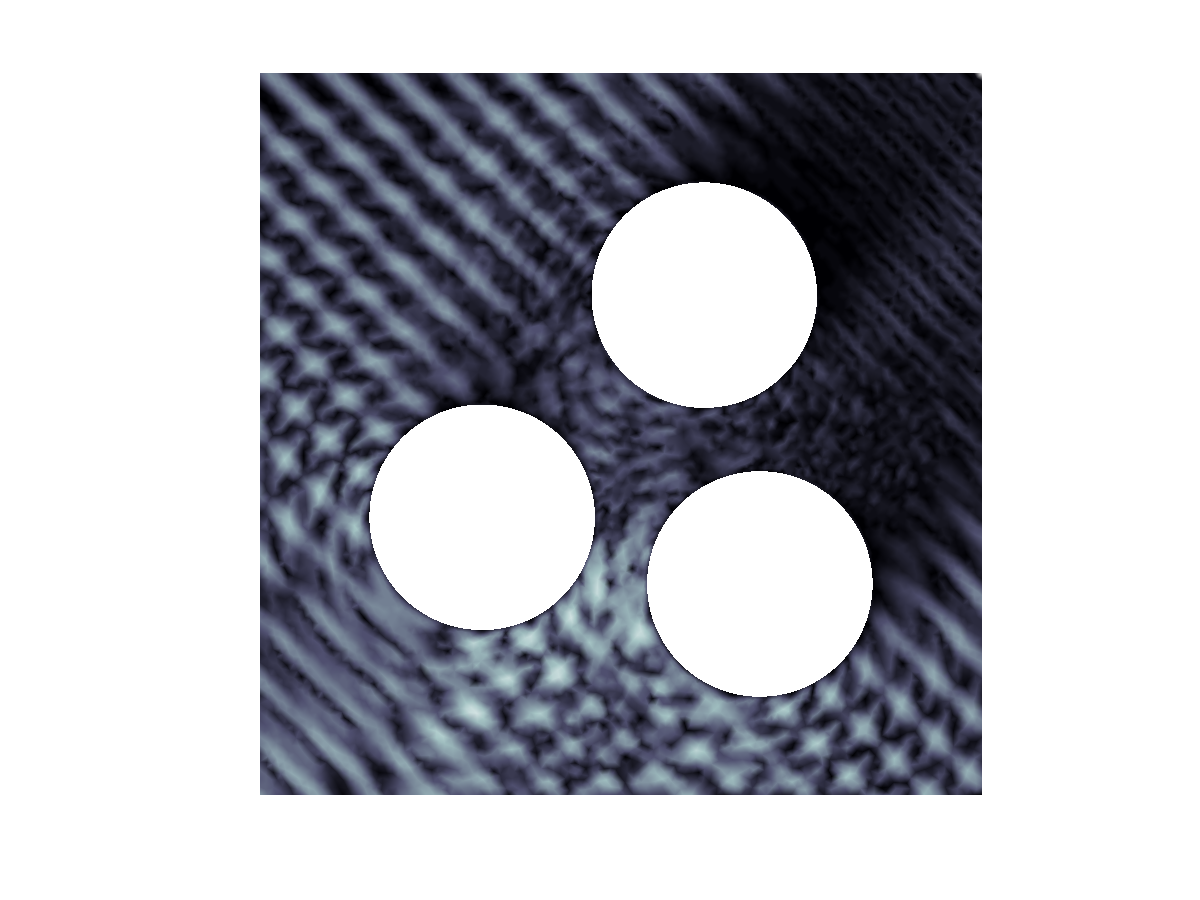} 
\end{tabular}
\end{center}
\caption{Four images of the scattering of a plane pressure wave by three rigid obstacles. The solution transitions to a time-harmonic regime. The absolute value of the displacement field is shown in a gray scale (black is no-displacement).}\label{figure:3}
\end{figure}

\section{The treatment of open arcs}\label{sec:8}

In the same spirit as the previous sections, smooth open arcs can be incorporated into the fully discrete Calder\'on Calculus for elastic waves. The idea is simple: instead of using a traditional cosine change of variable to modify the integral equation into a periodic integral equation, we will use the cosine change of variables to sample geometric features from the open arc and then define the discrete elements (the two layer potentials and the operators $\mathrm V$ and $\mathrm W$) using the same formulas as in the case of closed curves. Note that the operators $\mathrm K$ and $\mathrm J$ are not meaningful in the case of open arcs.

\paragraph{Cosine sampling of open arcs.} Let $\xx:[0,1]\to\mathbb R^2$ be a regular parametrization of a smooth simple open arc. Let us also consider the 1-periodic even function $\phi:\mathbb R\to[0,1]$ given by 
\[
\phi(t):=\smallfrac12 +\smallfrac12\cos(\pi(2t-1)).
\]
We then define
$\aaa(t):=\xx(\phi(t))$ and note that $\aaa(0)=\xx(0)=\aaa(1)$, $\aaa(\frac12)=\xx(1)$, and $\aaa(1-t)=\aaa(t)$ for all $t$. The normal vector field
\[
\nn(t):=\aaa'(t)^\perp=-\pi \sin((\pi(2t-1))\, \xx'(\phi(t))^\perp, \quad\mbox{where}\quad (c_1,c_2)^\perp:=(c_2,-c_1),
\]
satisfies $\nn(0)=\nn(\frac12)=\bs 0$, and $\nn(1-t)=-\nn(t)$ for all $t$. This means that the normal vector has different signs depending on whether we are moving from the first tip  to the second or back. We now choose a positive even integer $N=2M$, define $h:=1/N=1/(2M)$, and create the main discrete grid:
\begin{equation}\label{eq:8.1}
\mm_j:=\aaa((j-\smallfrac12)h), 
\qquad
\bb_j:=\aaa((j-1)h),
\qquad
\nn_j:=h\nn((j-\smallfrac12)h), \qquad j\in \mathbb Z_{N}.
\end{equation}
Let us first comment on these formulas. In comparison with \eqref{eq:2.1} the breakpoints and midpoints are displaced $\frac12h$ in parametric space. Whereas in the case of closed curves this is not relevant, for open arcs it will be essential that the ends of the arc are breakpoints of the discrete grid: $\bb_1=\bb_{2M+1}=\xx(0)$, $\bb_{M+1}=\xx(1)$. Also, note that points are sampled twice and we actually have:
\[
\mm_{N+1-j}=\mm_j, \qquad \bb_{N+2-j}=\bb_j, \qquad \nn_{N+1-j}=-\nn_j.
\]
The companion grids are similarly collected from the curve:
\begin{equation}\label{eq:8.2}
\mm_j^\pm:=\aaa((j-\smallfrac12 \pm \smallfrac16)h),
\quad
\bb_j^\pm:=\aaa((j-1\pm\smallfrac16)h),
\quad
\nn_j^\pm:=h \nn((j-\smallfrac12\pm \smallfrac16)h),
\quad j\in \mathbb Z_{N}.
\end{equation}

\paragraph{Even and odd vectors.} The duplication in the cosine sampling will have some effects in the structure of the discrete Calder\'on Calculus. Let $\bs\xi\in \mathbb C^{2N}$ be a vector with blocks $\bs\xi_j$. We will write $\bs\xi\in \mathbb C^{2N}_{\mathrm{even}}$ when $\bs\xi_{N+1-j}=\bs\xi_j$ for all $j$, and we will write
$\bs\xi\in \mathbb C^{2N}_{\mathrm{odd}}$ when $\bs\xi_{N+1-j}=-\bs\xi_j$ for all $j$. Even vectors are those in the nullspace of the matrix
\begin{equation}\label{eq:8.3}
\mathrm H:=
	\left[\begin{array}{cc} 
		\mathrm H_{\mathrm{sc}} & 0 \\
		0 & \mathrm H_{\mathrm{sc}}
	\end{array}\right],
\qquad
\mathrm H_{\mathrm{sc}}:=
	\left[\begin{array}{cccccc} 
		1 & & & & & -1 \\
		   & \ddots & & & \iddots \\
		  & & 1 & -1 & & \\
		 & & -1 & 1 & & \\
		 & \iddots & & & \ddots \\
		-1 & & & & & 1
	\end{array}\right],
\end{equation}
while odd vectors are in the nullspace of the matrix $|\mathrm H|$, obtained by taking the absolute value of the elements of $\mathrm H$. Additionally $\mathrm H\bs\xi=2\bs\xi$ if $\bs\xi\in \mathbb C^{2N}_{\mathrm{odd}}$ and $|\mathrm H|\bs\xi=2\bs\xi$ if $\bs\xi\in \mathbb C^{2N}_{\mathrm{even}}$. Finally if we sample an incident wave on an open arc using \eqref{eq:3.7}-\eqref{eq:3.8}, it is not difficult to prove that
\[
\bs\beta_0\in \mathbb C^{2N}_{\mathrm{even}} 
\qquad \mbox{and}\qquad
\bs\beta_1\in \mathbb C^{2N}_{\mathrm{odd}}.
\]

\paragraph{A Dirichlet crack.} If $\Gamma$ is the smooth open curve given in parametric form at the beginning of this section, we look for solutions of $\rho \partial_t^2 \bs U= \mathrm{div}\,\bs\sigma(\bs U)$ in $\mathbb R^2\setminus\Gamma$, with the corresponding radiation condition at infinity (see the comments after formula \eqref{eq:1.4}), and the Dirichlet condition $\gamma\bs U+\gamma\bs U^{\mathrm{inc}}=\bs 0$ on $\Gamma$. We assume that the discrete data have already been sampled with \eqref{eq:8.1}-\eqref{eq:8.2} and that the incident wave has been observed using \eqref{eq:3.7}-\eqref{eq:3.8}, outputting a vector $\bs\beta_0\in \mathbb C^{2N}_{\mathrm{even}}$. We now use the fact that
\[
\mathrm V_h(s)\bs\eta\in \mathbb C^{2N}_{\mathrm{even}} \quad \forall \bs\eta\in \mathbb C^{2N},
\quad \mbox{and}\quad
\mathrm V_h(s)\bs\eta=\bs 0 \quad\forall \bs\eta\in \mathbb C^{2N}_{\mathrm{odd}}.
\]
(These properties are not difficult to prove.) Therefore, if we solve
\[
(\mathrm V_h(s)+\mathrm H)\bs\eta=-\bs\beta_0,
\]
we are guaranteed that $\bs\eta\in \mathbb C^{2N}_{\mathrm{even}}$. This density is used to build the discrete elastic wavefield $\bs U_h(\mathbf z)=\mathrm S_h(s;\mathbf z)\bs\eta$. 

\paragraph{A Neumann crack.} In this case, we substitute the Dirichlet boundary condition on the open curve $\Gamma$ by $\bs t (\bs U)+\bs t(\bs U^{\mathrm{inc}})=\bs 0$. Using a similar argument as in the Dirichlet case, we solve the discrete integral equation
\[
(\mathrm W_h(s)+|\mathrm H|)\bs\psi=\bs\beta_1,
\]
obtain a unique $\bs\psi\in \mathbb C^{2N}_{\mathrm{odd}}$ and input it in a double layer potential representation $\bs U_h(\mathbf z)=\mathrm D_h(s;\mathbf z)\bs\psi$. 

\paragraph{Experiments.} We use the half circle $\Gamma=\{x^2+y^2=1\}\cap \{ y\ge0\}$ as the scattering arc. The physical parameters of the surrounding unbounded elastic medium are those of Section \ref{sec:6}. The incident wave is the pressure part of the function defined in \eqref{eq:6.0}. The solution is observed at ten random points on a circle of radius $5$. Since the exact solution is not know, we use a three grid principle to estimate convergence rates.

\begin{table}[h]\centering
\begin{tabular}{ccc}
\hline
\multicolumn{1}{|c|}{$N$} & \multicolumn{1}{c|}{e.c.r. Dirichlet Problem} & \multicolumn{1}{c|}{e.c.r. Neumann Problem} \\ \hline
 10  	&  ---   & ---   \\ \hline
 20  	&  ---   & ---   \\ \hline
 40  	&  3.449 & 4.446   \\ \hline
 80 	&  3.120 & 3.512 \\ \hline
 160 	&  3.030 & 3.318 \\ \hline
 320 	& 3.008  & 3.010          
\end{tabular}
\caption{Estimated convergence rates in the frequency domain Dirichlet and Neumann cracks. Given the fact that the solution is not known, a three grid principle is used to estimate convergence rates.}
\end{table}

\appendix

\section{Some functions}

For the sake of completeness we give here explicit expressions of all the functions that are involved in the definitions of the potentials and integral operators. First of all, we give the derivatives of the functions \eqref{eq:3.0}:
\begin{eqnarray*}
\psi'(r) &=& -\frac1{c_T} K_1(r/c_T)
			-\frac{2c_T}{r^2}\left(K_1(r/c_T)-\xi K_1(r/c_L)\right)
			-\frac1r\left( K_0(r/c_T)-\xi^2 K_0(r/c_L)\right),\\
\chi'(r) &=& -\frac1{2c_T} \left(K_1(r/c_T)+K_3(r/c_T)-\xi^3 \left( K_1(r/c_L)+K_3(r/c_L)\right)\right).
\end{eqnarray*}
The functions \eqref{eq:4.5} are combinations of the following derivatives:
\begin{eqnarray*}
G'(r) & = & \frac{-1}{2\pi\rho c_T}\big(K_1(r/c_T)-\xi K_1(r/c_L)\big) \\
G''(r) & = &  \frac{1}{4\pi\rho c_T^2}\big(K_0(r/c_T)+K_2(r/c_T)-\xi^2(K_0(r/c_L)+K_2(r/c_L))\big) \\
G'''(r) & = & \frac{-1}{8\pi\rho c_T^3}\big(3K_1(r/c_T)+K_3(r/c_T)-\xi^3(3K_1(r/c_L)+K_3(r/c_L))\big) \\
G^{(iv)}(r) & = &  \frac{1}{2\pi\rho c_T^4}\left(\left(\frac{3c_T^2}{r^2}+ 1\right)K_2(r/c_T)-\xi^4\left(\frac{3c_L^2}{r^2}+ 1\right)K_2(r/c_L)\right)
\end{eqnarray*}

\section{A technical identity}\label{app:B}

In the following results, for a given positive integer $N$ we will write $h:=1/N$ and $t_j:=j\,h$ for $j\in \mathbb Z$. We also let $\log_\# t:=\log(4\sin^2(\pi t))$. 

\begin{lemma}
For all $N$ and $t$
\begin{equation}\label{eq:B.1}
\sum_{j=1}^N \log_\#(t-t_j)=\log_\#(t/h), \qquad t/h\not\in \mathbb Z.
\end{equation}
\end{lemma}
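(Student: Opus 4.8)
The plan is to exponentiate the statement. Since $\log_\#$ is (the logarithm of) a nonnegative quantity, the identity \eqref{eq:B.1} is equivalent to the product formula
\[
\prod_{j=1}^N 4\sin^2(\pi(t-t_j)) = 4\sin^2(\pi t/h),
\]
and, recalling $t/h=Nt$, to
\[
\prod_{j=1}^N 4\sin^2(\pi(t-t_j)) = 4\sin^2(\pi N t).
\]
The restriction $t/h\notin\mathbb Z$ is exactly the condition that keeps every factor on the left, and the expression on the right, strictly positive: indeed $\sin(\pi(t-t_j))=0$ forces $Nt-j\in N\mathbb Z$, i.e. $Nt\in\mathbb Z$, which is the same singular set as that of $\sin(\pi Nt)$. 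Hence under the stated hypothesis taking logarithms is legitimate and nothing degenerates.

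First I would pass to complex exponentials via the elementary identity $4\sin^2(\pi\theta)=|e^{2\pi\imath\theta}-1|^2$, which follows from $e^{2\pi\imath\theta}-1=e^{\imath\pi\theta}\,2\imath\sin(\pi\theta)$. Setting $\omega:=e^{2\pi\imath t}$ and $\zeta:=e^{2\pi\imath/N}$, each factor becomes $|\omega\zeta^{-j}-1|^2$, so the whole product equals $\bigl|\prod_{j=1}^N(\omega\zeta^{-j}-1)\bigr|^2$.

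The key step is to evaluate $\prod_{j=1}^N(\omega\zeta^{-j}-1)$ by observing that, as $j$ runs over $1,\dots,N$, the numbers $\zeta^{-j}$ run exactly once over all the $N$-th roots of unity. Pulling a factor $\omega$ out of each term and applying the factorization $\prod_{\zeta'^N=1}(X-\zeta')=X^N-1$ at $X=\omega^{-1}$, this product collapses to $(-1)^N(1-\omega^N)$; taking modulus squared makes both $\omega^N$ and the sign $(-1)^N$ disappear, leaving $|1-\omega^N|^2=|1-e^{2\pi\imath Nt}|^2=4\sin^2(\pi Nt)$. Combining with the previous paragraph and taking logarithms yields \eqref{eq:B.1}. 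As an alternative one could simply invoke the classical sine product $\prod_{k=0}^{N-1}\sin(\pi(t+k/N))=2^{1-N}\sin(\pi Nt)$ and use the $1$-periodicity of $\sin^2$ to identify the index set $\{t-t_j \bmod 1\}$ with $\{t+k/N \bmod 1\}$.

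I expect the difficulties here to be bookkeeping rather than conceptual. In the roots-of-unity route the delicate points are the reindexing $\{\zeta^{-j}\}_{j=1}^N=\{N\text{-th roots of unity}\}$ and keeping track of the extraneous factor $\omega^N$ and sign $(-1)^N$ (both harmless once one takes the modulus); in the alternative route the only subtlety is the cancellation of the powers of two, namely that the $4^N$ coming from the $N$ factors of $4$ beats down the $4^{N-1}$ in the sine product so that precisely a single factor of $4$ survives. In either case the remark that $t/h\notin\mathbb Z$ removes exactly the common singularities of the two sides is what makes the logarithmic passage rigorous.
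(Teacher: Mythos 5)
Your proof is correct, but it takes a genuinely different route from the paper. You exponentiate the identity and reduce it to the product formula $\prod_{j=1}^N 4\sin^2(\pi(t-t_j))=4\sin^2(\pi Nt)$, which you establish via $4\sin^2(\pi\theta)=|e^{2\pi\imath\theta}-1|^2$ and the factorization $\prod_{\zeta'^N=1}(X-\zeta')=X^N-1$; the bookkeeping of the extraneous factor $\omega^N$ and the sign $(-1)^N$ is handled correctly, and your observation that $t/h\notin\mathbb Z$ is exactly the common singular set of both sides makes the passage to logarithms legitimate. The paper instead expands $\log_\#$ in its Fourier series $\sum_{n\neq 0}|n|^{-1}e_n(t)$ and uses the aliasing identity $\sum_{j=1}^N e_n(t-jh)=Ne_n(t)$ when $N\mid n$ (and $0$ otherwise), so that only the frequencies $n=mN$ survive and the sum collapses to $\sum_{m\neq0}|m|^{-1}e_m(t/h)=\log_\#(t/h)$. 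Your argument is more elementary and entirely finite, sidestepping the conditionally convergent symmetric limit that the paper's Fourier manipulation implicitly interchanges with the finite sum over $j$; the paper's argument, on the other hand, sets up precisely the discrete-orthogonality machinery that is reused in the proof of Lemma \ref{lemma:B.2}. Since that lemma only differentiates the identity \eqref{eq:B.1} itself (not its proof), either derivation serves the downstream purpose equally well.
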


\begin{proof}
We recall the Fourier expansion of the periodic logarithm
\begin{equation}\label{eq:B.2}
\log_\# t=\lim_{M\to\infty}\sum_{0\neq n=-M}^N \frac1{|n|}e_n(t), \qquad e_n(t):=\exp(2\pi\imath n t).
\end{equation}
Also
\begin{equation}\label{eq:B.3}
\sum_{j=1}^N e_n(t-jh)=e_n(t)\sum_{j=1}^N \exp\left(-\frac{2\pi\imath j n}{N}\right)=
e_n(t)\left\{\begin{array}{ll} N, & \mbox{if $n/N \in \mathbb Z$}, \\ 0, & \mbox{otherwise}.\end{array}\right.
\end{equation} 
Combining \eqref{eq:B.2} and \eqref{eq:B.3} it is easy to see that
\begin{eqnarray*}
\sum_{j=1}^N \log_\# (t-jh)
	&=& \lim_{M\to \infty} \sum_{0\neq n=-M}^M \frac1{|nN|}N e_{nN}(t)\\
	&=& \lim_{M\to \infty} \sum_{0\neq n=-M}^M  \frac1{|n|} e_n(t/h)=\log_\#(t/h).
\end{eqnarray*}
\end{proof}

\begin{lemma}\label{lemma:B.2}
For all $N\in \mathbb Z$ and $|n|\le N-1$,
\[
h\sum_{j=1}^{N-1}\cot(\pi(t-t_j))e_n(t_j)=\mathrm{p.v.}\int_0^1 \cot(\pi(t-\tau))e_n(\tau)\mathrm d\tau
+\cot(\pi t/h) e_n(t), \quad t/h\not\in\mathbb Z.
\]
\end{lemma}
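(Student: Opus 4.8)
The plan is to follow the same Fourier-analytic route used to prove \eqref{eq:B.1}, replacing the periodic logarithm by the periodic cotangent. The starting point is the (conditionally convergent) Fourier expansion of the cotangent, obtained by differentiating \eqref{eq:B.2}: since $\tfrac{\mathrm d}{\mathrm ds}\log_\# s=2\pi\cot(\pi s)$, term-by-term differentiation yields
\[
\cot(\pi s)=-\imath\sum_{0\ne m\in\mathbb Z}\mathrm{sgn}(m)\,e_m(s),
\]
understood as a symmetric (principal-value) limit. Inserting this into the continuous integral and using orthogonality of $\{e_m\}$ gives at once the Hilbert transform of a single mode, $\mathrm{p.v.}\int_0^1\cot(\pi(t-\tau))e_n(\tau)\,\mathrm d\tau=-\imath\,\mathrm{sgn}(n)\,e_n(t)$, which disposes of the first term on the right-hand side.

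For the discrete sum I would substitute the same expansion into $h\sum_j\cot(\pi(t-t_j))e_n(t_j)$, write $e_m(t-t_j)=e_m(t)e_{-m}(t_j)$, and interchange the finite $j$-sum with the $m$-sum. The inner sum $\sum_j e_{n-m}(t_j)$ is exactly the geometric sum evaluated in \eqref{eq:B.3}: it equals $N$ when $m\equiv n\pmod N$ and vanishes otherwise. This aliasing step collapses the double sum onto the modes $m=n+kN$, $k\in\mathbb Z$; using $hN=1$ and $e_{n+kN}(t)=e_n(t)\,e_k(t/h)$ I arrive at
\[
h\sum_{j}\cot(\pi(t-t_j))\,e_n(t_j)=-\imath\,e_n(t)\!\!\sum_{\substack{k\in\mathbb Z\\ n+kN\ne0}}\!\!\mathrm{sgn}(n+kN)\,e_k(t/h).
\]
Here I read the trapezoidal sum over a full period $j=1,\dots,N$, noting that $t_N\equiv t_0$ modulo $1$ contributes the single finite term that reconciles the endpoint.

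The hypothesis $|n|\le N-1$ enters precisely at this point: for $0<|n|\le N-1$ one has $n+kN\ne0$ for every $k$, and moreover $\mathrm{sgn}(n+kN)=+1$ for $k\ge1$, $=-1$ for $k\le-1$, and $=\mathrm{sgn}(n)$ for $k=0$. The $k$-sum therefore splits as $\mathrm{sgn}(n)+\sum_{k\ne0}\mathrm{sgn}(k)\,e_k(t/h)$, and the second piece is again the cotangent expansion, now evaluated at $t/h$, equal to $\imath\cot(\pi t/h)$. Multiplying back by $-\imath\,e_n(t)$ gives $-\imath\,\mathrm{sgn}(n)e_n(t)+\cot(\pi t/h)e_n(t)$, which is the claimed identity (the case $n=0$ is identical after excluding $k=0$, and reproduces $He_0=0$ since $\mathrm{sgn}(0)=0$).

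The main obstacle is the rigor of these term-by-term manipulations, since the cotangent series is only conditionally convergent and is here combined with a principal-value integral and with an interchange of summations. I would make this precise by first performing every step in the upper half-plane, where $\cot(\pi s)=-\imath\bigl(1+2\sum_{m\ge1}e_m(s)\bigr)$ converges absolutely for $\mathrm{Im}\,s>0$, carrying out the aliasing interchange there, and only then taking the boundary limit $\mathrm{Im}\,s\downarrow0$ for $t/h\notin\mathbb Z$. Alternatively one can sidestep the series entirely by feeding the partial-fraction expansion of $\cot$ into the reindexing of \eqref{eq:B.3}. Once the summation order is justified, the remainder is just the sign bookkeeping controlled by $|n|\le N-1$.
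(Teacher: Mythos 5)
Your argument is correct, but it is not the route the paper takes. The paper's proof never expands the cotangent in a Fourier series: it regularizes the kernel by writing $s_n(\tau;t)=\cot(\pi(t-\tau))(e_n(\tau)-e_n(t))$, shows by the \emph{finite} algebraic identity $\imath\cot(\pi\tau)(1-e_n(\tau))=1+2e_1(\tau)+\cdots+2e_{n-1}(\tau)+e_n(\tau)$ that $s_n(\cdot;t)$ is a trigonometric polynomial of degree at most $N-1$, and then invokes exactness of the trapezoidal rule on $\mathbb T_{N-1}$; the correction term $\cot(\pi t/h)e_n(t)$ enters through the $n=0$ identity \eqref{eq:B.4}, obtained by differentiating \eqref{eq:B.1}. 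Your proof instead diagonalizes everything in Fourier modes and obtains the correction term as the aliasing error of the trapezoidal rule acting on the cotangent's spectrum, with the hypothesis $|n|\le N-1$ appearing exactly where it must, namely as the condition $\mathrm{sgn}(n+kN)=\mathrm{sgn}(k)$ for $k\ne 0$. The trade-off is clear: the paper's argument is purely finite for $n\ne 0$ (no conditionally convergent series to interchange), while yours makes the mechanism behind the $\cot(\pi t/h)e_n(t)$ term and the sharpness of the degree restriction transparent, at the price of justifying the term-by-term manipulations. Your proposed fix --- running the computation with the absolutely convergent one-sided expansion $\cot(\pi s)=-\imath\bigl(1+2\sum_{m\ge 1}e_m(s)\bigr)$ for $\mathrm{Im}\,s>0$ and passing to the boundary, which is legitimate because the left-hand side is a finite sum of functions continuous across real $t$ with $t/h\notin\mathbb Z$ --- does close that gap (one checks directly that $\sum_{k\ge 0}e_k(t/h)=\tfrac{1}{1-e_1(t/h)}$ reproduces $\tfrac{\imath}{2}\cot(\pi t/h)+\tfrac12$ in the limit). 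You are also right to read the sum over a full period $j=1,\dots,N$: the upper limit $N-1$ in the statement of the Lemma is a typo, as the paper's own proof and Proposition \ref{prop:5.1} both use $j=1,\dots,N$, and dropping the $j=N$ term would lose the nonzero contribution $h\cot(\pi t)e_n(0)$.
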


\begin{proof}
Differentiating \eqref{eq:B.1} we obtain
\begin{equation}\label{eq:B.4}
h\sum_{j=1}^{N-1}\cot(\pi(t-t_j))=\cot(\pi t/h), \qquad t/h\not\in \mathbb Z,
\end{equation}
which is the case $n=0$. Consider now the function
\[
s_n(\tau;t):=\cot(\pi(t-\tau))(e_n(\tau)-e_n(t))=e_n(t)\cot(\pi(\tau-t))(1-e_n(\tau-t)).
\]
Note now that for $n \ge 1$,
\begin{eqnarray*}
\imath \cot(\pi\tau) (1-e_{n}(\tau))&=&(1+e_1(\tau)) \frac{1-e_{n}(\tau)}{1-e_{1}(\tau)} \\
 &=&(1+e_{1}(\tau)) (1+e_{1}(\tau)+e_{2}(\tau)+ \ldots+e_{n-1}(\tau))\\
  &=&  1+2e_{1}(\tau)+2e_{2}(\tau)+ \ldots+2e_{n-1}(\tau)+e_{n}(\tau)
 \end{eqnarray*}
and therefore
\[
s_n(\cdot;t)\in \mathrm{span}\{ e_m \,:\, 0\le m\le n\}\qquad n\ge 1. 
\]
By conjugation, it is easy to see then that 
\begin{equation}\label{eq:B.5}
s_n(\cdot;t)\in \mathbb T_{N-1}:=\mathrm{span}\{ e_m\,:\, |m|\le N-1\}, \quad |n|\le N-1.
\end{equation}
It is well known that
\[
\int_0^1\phi(\tau)\mathrm d\tau=h\sum_{j=1}^N \phi(t_j) \qquad \forall \phi\in \mathbb T_{N-1}.
\]
Therefore, by \eqref{eq:B.5},
\begin{eqnarray*}
\mathrm{p.v.}\int_0^1 \cot(\pi(t-\tau))e_n(\tau)\mathrm d\tau
	&=&\int_0^1 s_n(\tau;t)\mathrm d\tau = h\sum_{j=1}^N s_n(t_j;t) \\
	&=& h\sum_{j=1}^N \cot(\pi(t-t_j))e_n(t_j)-e_n(t) h \sum_{j=1}^N \cot(\pi(t-t_j))\\
	&=&h\sum_{j=1}^N \cot(\pi(t-t_j))e_n(t_j)-e_n(t)\cot(\pi t/h),
\end{eqnarray*}
where in the last equality we have used \eqref{eq:B.4}.
\end{proof}

\bibliographystyle{abbrv}
\bibliography{referencesW}

\end{document}